\newtheorem{theorem}{Theorem}[section]
\newtheorem{lemma}[theorem]{Lemma}
\newtheorem{proposition}[theorem]{Proposition}
\newtheorem{corollary}[theorem]{Corollary}
\theoremstyle{definition}
\newtheorem{definition}[theorem]{Definition}
\newtheorem{example}[theorem]{Example}
\theoremstyle{remark}
\newtheorem{remark}[theorem]{Remark}
\numberwithin{equation}{section}
\newcommand*{\bigchi}{\mbox{\Large$\chi$}}
\begin{document}
\setcounter{page}{1}

\title[Location and perturbation results on coneigenvalues]
{On coneigenvalues of quaternion matrices: location and perturbation}
	
\author[Pallavi, Shrinath and Sachindranath]{Pallavi Basavaraju, Shrinath Hadimani 
and Sachindranath Jayaraman}
\address{School of Mathematics\\ 
Indian Institute of Science Education and Research Thiruvananthapuram\\ 
Maruthamala P.O., Vithura, Thiruvananthapuram -- 695 551, Kerala, India.}
\email{(pallavipoorna20, srinathsh3320, sachindranathj)@iisertvm.ac.in, 
sachindranathj@gmail.com}

\subjclass[2010]{15B33, 12E15, 15A18, 15A42, 15A66}
	
\keywords{Quaternion matrices, standard eigenvalues of quaternion matrices, 
basal right coneigenvalues, the Ger\v{s}gorin theorem, the Hoffman-Wielandt and 
generalized Hoffman-Wielandt type inequalities, spectral variation, Hausdorff 
distance between basal right coneigenvalues}
	
\begin{abstract} 
We derive some localization and perturbation results for 
coneigenvalues of quaternion matrices. In localization results, we derive Ger\v{s}gorin 
type theorems for right and left coneigenvalues of quaternion matrices. We prove 
that certain coneigenvalues lie in the union of  Ger\v{s}gorin balls, in contrast to the 
complex situation where all eigenvalues lie in the union of Ger\v{s}gorin discs. In 
perturbation results, we derive a result analogous to the Hoffman-Wielandt inequality 
for basal right coneigenvalues of conjugate normal quaternion matrices. Results 
analogous to the Bauer-Fike theorem and a generalization of the Hoffman-Wielandt inequality 
are discussed for basal right coneigenvalues of condiagonalizable quaternion matrices. 
Finally, we define spectral variation and Hausdorff distance between right (con)eigenvalues 
of two quaternion matrices and obtain bounds on them. 
\end{abstract}
	
\maketitle

\section{Introduction}\label{sec-1}

Localization and perturbation theory of matrices are well studied in the 
literature for complex matrices and their eigenvalues. These theories have been 
systematically developed and refined by several mathematicians, notably R. Bhatia, 
R. A. Brualdi, L. Elsner, S. Ger\v{s}gorin, A. J. Hoffman, L. Mirsky, A. Ostrowski, 
J. G. Sun, H. W. Wielandt and many others. Localization results give simple criteria 
or bounds to guarantee that eigenvalues of a given matrix are contained in sets like 
the half plane, or a disc, or a ray. Many of the results concerning inclusion regions 
of eigenvalues can be found in \cite{Brualdi}, \cite{Horn-Johnson}. One of the important 
localization results for complex matrices which yield a simple region containing the  
eigenvalues is the Ger\v{s}gorin theorem (Theorem $6.1.1$, \cite{Horn-Johnson}). 
Ger\v{s}gorin type results have also been studied in the context of eigenvalues of 
quaternion matrices (see for instance, \cite{Ahmad-Ali-2}). In the study of perturbation 
theory concerning eigenvalues, understanding the proximity between two matrices in 
terms of norms enables us to determine the proximity of their respective eigenvalues. 
Some of the well known perturbation results in the study of complex matrices are the
 Bauer-Fike theorem (Theorem $6.3.2$, \cite{Horn-Johnson}), the Hoffman-Wielandt inequality 
(Theorem $6.3.5$, \cite{Horn-Johnson}), its generalization called the Hoffman-Wielandt 
type inequality ($(2)$ of Remark $3.3$ in \cite{Sun}, and also see \cite{Sun-2}) and Elsner's inequality 
(Theorem $1$, \cite{Elsner}) for spectral variation. Some of these results are also 
studied for eigenvalues of quaternion matrices in \cite{Ahmad-Ali-Ivan} and 
\cite{Pallavi-Hadimani-Jayaraman-3}. This 
manuscript aims to derive analogues of these perturbation results and Ger\v{s}gorin type 
results for coneigenvalues of quaternion matrices. We begin with some notations 
and definitions. \\

\medskip
\noindent
Let $\mathbb{R}$ and $\mathbb{C}$ denote the fields of real and complex 
numbers, respectively. The space of real quaternions is defined by $\mathbb{H}:= \{a_0+ a_1i+ a_2 j
 + a_3 k \ | \ a_i \in \mathbb{R}\}$ with $i^2=j^2= k^2=ijk=-1, \, ij=k=-ji, \, jk=i=-kj, \, ki=j=-ik$. For 
$q = a_0+ a_1i+ a_2 j+ a_3 k \in \mathbb{H}$, $\overline{q}:= a_0- a_1i -a_2 j- a_3k$ denotes 
the conjugate of $q$ and $|q|:=\sqrt{a_0^2 +a_1^2+a_2^2+a_3^2}$ denotes the modulus of $q$. 
Quaternions $p$, $q$ are said to be similar, if $s^{-1}qs =p$ for some nonzero 
$s \in \mathbb{H}$. This similarity gives an equivalence relation on $\mathbb{H}$. Quaternions 
have emerged as a powerful and versatile tool with applications spanning diverse fields, 
from computer graphics and robotics to physics and engineering (see \cite{Adler}, \cite{Finkelstein-Jauch}). 
Originally introduced by Sir William Rowan Hamilton in the 19th century, quaternions extend 
the concept of complex numbers to a four-dimensional space, providing a rich algebraic 
structure that facilitates a concise and elegant description of rotations and orientations 
of objects in three-dimensional space. Noncommutativity of the multiplication of quaternions 
makes their study more interesting. For an in-depth study on quaternions and related topics, 
we refer to \cite{Rodman}. \\

\medskip
\noindent
The set of all $n \times n$ matrices over $X$ is denoted by $M_n(X)$, 
where $X = \mathbb{R}$, $\mathbb{C}$ or $\mathbb{H}$. For $A = (q_{ij}) \in M_n(\mathbb{H})$, 
the transpose is defined as $A^T:=(q_{ji})$ and the conjugate transpose is defined as 
$A^{\ast} := (\overline{q}_{ji})$. $A \in M_n(\mathbb{H})$ is called a normal matrix if 
$A^*A = AA^*$; unitary matrix if $A^*A=I=AA^*$; Hermitian matrix if $A^*=A$; skew-symmetric 
matrix if $A=-A^T$ and invertible matrix if $AB=BA=I$ for some $B \in M_n(\mathbb{H})$. Matrices 
$A$, $B \in M_n(\mathbb{H})$ are said to be similar if $P^{-1}AP=B$ for some invertible matrix 
$P \in M_n(\mathbb{H})$. When $A$ is similar to $B$, we write $A \sim B$. If $P^{-1}AP=B$ for 
$A$, $B$ and $P$ in $M_n(\mathbb{C})$, then we say $A$ is complex similar to $B$. 
A matrix $A \in M_n(\mathbb{H})$ is diagonalizable if $S^{-1}AS=D$ for some invertible matrix 
$S \in M_n(\mathbb{H})$, where $D \in M_n(\mathbb{H})$ is a diagonal matrix.\\

\medskip
\noindent
Two matrices $A$, $B \in M_n(\mathbb{C})$ are complex consimilar, if $\overline{S}^{-1}AS=B$ 
for some invertible matrix $S \in M_n(\mathbb{C})$. 
The map $A \mapsto \overline{S}^{-1}AS$ is an equivalence relation on $M_n(\mathbb{C})$. 
This is no longer true when $A$ and $B$ are in $M_n(\mathbb{H})$. The theory of complex 
similarity is developed to study the matrix representation of linear transformation when 
considered with different bases. Similarly, the theory of complex consimilarity arises in the 
study of the matrix representation of semilinear transformations with respect to different bases. 
A semilinear transformation is a map $T : U \rightarrow V$, where $U$ and $V$ are complex vector 
spaces such that the map $T$ is additive ($T(u_1 + u_2)=T(u_1)+T(u_2)$ 
for all $u_1$, $u_2 \in U$) and conjugate homogeneous ($T(\alpha u)= \overline{\alpha} T(u)$ 
for all $\alpha \in \mathbb{C}$ and $u \in U$). Semilinear maps arise in quantum mechanics 
in the study of time reversal and in spinor calculus. This motivates us to study consimilarity 
of matrices over quaternions. \\

\medskip
\noindent
The manuscript is organized as follows. Section \ref{sec-1} 
is introductory. Section \ref{sec-2} contains prerequisites about 
quaternion matrices and some basic results which are needed for this manuscript. The primary 
findings of this manuscript are given in Section \ref{sec-3}. For convenience of reading, 
this section has been further divided into subsections. In Section \ref{sec-3.1}, we present 
Ger\v{s}gorin type results for right and left coneigenvalues of quaternion matrices. Section 
\ref{sec-3.2} contains perturbation results for right coneigenvalues of quaternion matrices 
that are analogous to the Hoffman-Wielandt inequality, a generalized Hoffman-Wielandt inequality 
and the Bauer-Fike theorem. In Section \ref{sec-3.3}, we define 
spectral variation and Hausdorff distance for right (con)eigenvalues of quaternion matrices 
and give bounds on them.

\section{Preliminaries}\label{sec-2}
For $q = a_0+ a_1 i+ a_2 j+ a_3 k \in \mathbb{H}$, the $j$-conjugate of $q$ is defined as 
$\widetilde{q}:= -jqj = a_0- a_1i+ a_2 j- a_3 k$. It is easy to verify that for $p,q \in 
\mathbb{H}, \, \widetilde{p+q} = \widetilde{p} + \widetilde{q}, 
\, \widetilde{pq} = \widetilde{q} \, \widetilde{p}, \, \widetilde{\widetilde{p}}=p,$  and 
$\overline{\widetilde{p}}= 
\widetilde{\overline{p}}$. Note that $\widetilde{q}=\overline{q}$ if and only if 
$a_2=0$. A matrix $A \in M_n(\mathbb{H})$, can be written as $A = A_0+ A_1 i+ A_2 j+ A_3 k$, where 
$A_l \in M_n(\mathbb{R})$ for $l=0,\ldots,3$.  For $A =A_0+ A_1 i+ A_2 j+ A_3 k \in M_n(\mathbb{H})$ 
the $j$-conjugate of $A$ denoted by $\widetilde{A}$ is defined as 
$\widetilde{A}:=-jAj = A_0- A_1 i+ A_2 j- A_3 k$. Notice that if $A \in M_n(\mathbb{C})$, then 
$\widetilde{A}=\overline{A}$. Matrices $A$, $B \in M_n(\mathbb{H})$ are consimilar if 
$\widetilde{S}^{-1}AS =B$ for some invertible matrix $S \in M_n(\mathbb{H})$ (see for instance 
\cite{Liping}) and we write $A \stackrel{\text{\tiny c}}{\sim} B$. Similarly, two quaternions 
$p$ and $q$ are consimilar if $\widetilde{r}^{-1}pr=q$ for some nonzero quaternion $r$. In \cite{Liping}, 
the author proves that two complex matrices are complex consimilar if and only if they are 
consimilar as quaternion matrices. Therefore, quaternion consimilarity is a natural extension 
of complex consimilarity. Moreover, the map $A \mapsto \widetilde{S}^{-1}AS$ is an equivalence 
relation on $M_n(\mathbb{H})$. A matrix $A \in M_n(\mathbb{H})$ is said to be condiagonalizable 
if $\widetilde{P}^{-1}AP =D$ for some invertible matrix $P \in M_n(\mathbb{H})$, where 
$D \in M_n(\mathbb{H})$ is a diagonal matrix. $A \in M_n(\mathbb{H})$ is called conjugate 
normal if $\widetilde{AA^{\ast}} = A^{\ast}A $. For $A \in M_n(\mathbb{H})$, a quaternion 
$\lambda$ is called a right (left) coneigenvalue of $A$ if $A\widetilde{x}=x\lambda$ 
($A\widetilde{x}=\lambda x$) for some nonzero vector $x \in \mathbb{H}^n$. The 
vector $x$ is called a right (left) coneigenvector corresponding to the right (left) 
coneigenvalue $\lambda$. Equivalently, by taking $y =\widetilde{x}$ and using the fact 
that $\widetilde{\widetilde{x}}=x$, we can define a right (left) coneigenvalue of $A$
as $Ay=\widetilde{y}\lambda$ ($Ay=\lambda \widetilde{y}$). A quaternion $\mu$ is called 
a right (left) eigenvalue of $A$ if $Ay=y\mu$ ($Ay =\mu y$) for some nonzero vector 
$y \in \mathbb{H}^n$. Note that, a matrix $A \in M_n(\mathbb{H})$ can have infinitely many right 
eigenvalues, because a quaternion similar to a right eigenvalue is also a right 
eigenvalue of $A$. However, $A \in M_n(\mathbb{H})$ has precisely $n$ right eigenvalues, which 
are complex numbers with imaginary components that are nonnegative (see Theorem $5.4$, \cite{Zhang}). 
These $n$ right eigenvalues are referred to as the standard eigenvalues of $A$. Any right 
eigenvalue of $A$ is similar to one of these $n$ standard eigenvalues of $A$. Similar results are 
true for right coneigenvalues. We state this below and a proof can be found in 
\cite{Liping}. 

\medspace
\begin{proposition}\label{prop-1}
Let $A$, $B \in M_n(\mathbb{H})$. Then,
\begin{enumerate}
\item $\widetilde{A+B} =\widetilde{A}+\widetilde{B}$.
\item $\widetilde{AB}=\widetilde{A} \widetilde{B}$.
\item $A \stackrel{\text{\tiny c}}{\sim} B \iff jA \sim jB \iff Aj \sim Bj \iff jA \sim Bj$.
\item $A \stackrel{\text{\tiny c}}{\sim} J_{n_1}(\lambda_1) \oplus J_{n_2}(\lambda_2) \oplus 
\dots \oplus J_{n_r}(\lambda_r) := J_A^c$, \\ where $J_{n_k}(\lambda_k) = 
\begin{bmatrix}
			\lambda_k & 1 & & \\
			& \lambda_k & \ddots & \\
			& & \ddots & 1 \\
			& & & \lambda_k 
\end{bmatrix}$ is the Jordan block of order $n_k$ corresponding to right coneigenvalue 
$\lambda_k =a_k +b_k j$, $b_k$, $a_k \in \mathbb{R}$, $a_k \geq 0$, $k = 1, \ldots, r$
of $A$. $J_A^c$ is unique up to the order of the Jordan blocks 
$J_{n_k}(\lambda_k)$, and $J_A^c$ is called the Jordan canonical form of $A$ under 
consimilarity. 
\item If $A \stackrel{\text{\tiny c}}{\sim} B$, then $A$ and $B$ have the same right 
coneigenvalues.
\item A quaternion $\lambda$ is a right coneigenvalue of $A$ if and only if 
$\widetilde{\alpha}^{-1} \lambda \alpha$ is a right coneigenvalue of $A$ for any nonzero 
$\alpha \in \mathbb{H}$. 
\item A quaternion $\lambda$ is a right coneigenvalue of $A$ $\iff$ $j\lambda$ is a right 
eigenvalue of $jA$ $\iff$ $\lambda j$ is a right eigenvalue of $Aj$. 
\end{enumerate}
\end{proposition}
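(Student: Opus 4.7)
The plan is to reduce every item to the definitional formula $\widetilde{X} = -jXj$, viewed on $n \times n$ matrices by interpreting $jI$ as the diagonal quaternion scalar matrix so that matrix multiplication stays associative and $(jI)(-jI) = I$. Two elementary identities will do essentially all the work: for any invertible $S \in M_n(\mathbb{H})$ and any vector $x \in \mathbb{H}^n$,
\[
j\widetilde{S} \;=\; (jI)(-jSj) \;=\; (-j^{2}I)Sj \;=\; Sj, \qquad \widetilde{x}\,j \;=\; (-jxj)j \;=\; -jx\,j^{2} \;=\; jx.
\]
With these in hand, (1) is immediate from distributivity, and (2) falls out of $\widetilde{A}\widetilde{B} = (-jAj)(-jBj) = (-jI)A(jI)(-jI)B(jI) = -(jI)AB(jI) = -jABj = \widetilde{AB}$, since the middle factor $(jI)(-jI) = I$ collapses cleanly.

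For (3), I would rewrite $\widetilde{S}^{-1}AS = B$ as $AS = \widetilde{S}B$, left-multiply by $j$, and apply $j\widetilde{S} = Sj$ to obtain $(jA)S = S(jB)$, i.e.\ $S^{-1}(jA)S = jB$; the equivalence with $Aj \sim Bj$ comes from the same trick applied on the right, and $jA \sim Bj$ is then obtained either by composing the two or by the choice $P = Sj$, which gives $P^{-1}(jA)P = Bj$ directly. Item (4) is the Jordan canonical form under consimilarity, which I would invoke directly from \cite{Liping} rather than reprove. For (5), given $A\widetilde{x} = x\lambda$ and $B = \widetilde{S}^{-1}AS$, the choice $y = \widetilde{S}^{-1}x$ makes $\widetilde{y} = S^{-1}\widetilde{x}$ (using (2) and $\widetilde{\widetilde{S}} = S$), and then
\[
B\widetilde{y} \;=\; \widetilde{S}^{-1}AS \cdot S^{-1}\widetilde{x} \;=\; \widetilde{S}^{-1}A\widetilde{x} \;=\; \widetilde{S}^{-1}x\lambda \;=\; y\lambda.
\]
The reverse direction is symmetric, as $\stackrel{\text{\tiny c}}{\sim}$ is an equivalence relation.

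For (6), given $A\widetilde{x} = x\lambda$ and nonzero $\alpha \in \mathbb{H}$, the correct choice of new coneigenvector is $y = x\widetilde{\alpha}$: then $\widetilde{y} = \widetilde{x}\alpha$ by (2), and
\[
A\widetilde{y} \;=\; A\widetilde{x}\,\alpha \;=\; x\lambda\alpha \;=\; x\widetilde{\alpha}\,(\widetilde{\alpha}^{-1}\lambda\alpha) \;=\; y\,(\widetilde{\alpha}^{-1}\lambda\alpha),
\]
with the converse obtained by replacing $\alpha$ by $\alpha^{-1}$ and using $\widetilde{\alpha^{-1}} = \widetilde{\alpha}^{-1}$. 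For (7), left-multiplying $A\widetilde{x} = x\lambda$ by $jI$ and invoking $jx = \widetilde{x}j$ gives $(jA)\widetilde{x} = (jx)\lambda = \widetilde{x}(j\lambda)$, so $j\lambda$ is a right eigenvalue of $jA$ with eigenvector $\widetilde{x}$; and the chain $(Aj)x = A(jx) = A(\widetilde{x}j) = (A\widetilde{x})j = x(\lambda j)$ yields the other equivalence. Both converses are recovered by multiplying by $-j$ on the appropriate side to undo these manipulations.

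The calculations themselves are all short; the only real obstacle is the bookkeeping forced by the noncommutativity of $\mathbb{H}$, so one must carefully track whether each scalar $j$ acts on the left or the right at every step, and verify in (2) that $\widetilde{\cdot}$ preserves (rather than reverses) the order of matrix factors. The only step requiring nontrivial outside input is (4), for which I would quote the consimilarity Jordan canonical form from \cite{Liping}.
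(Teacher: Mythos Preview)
Your proposal is correct. The paper itself does not prove this proposition at all: it simply states the seven items and refers the reader to \cite{Liping} for a proof. Your approach is therefore strictly more detailed than the paper's. You supply direct elementary arguments for (1)--(3) and (5)--(7) by systematically exploiting the defining relation $\widetilde{X} = -jXj$ together with the two key identities $j\widetilde{S} = Sj$ and $\widetilde{x}j = jx$, and you correctly isolate (4) as the only item whose proof genuinely requires the Jordan canonical form under consimilarity from \cite{Liping}. This is exactly the right division of labor, and it makes your write-up more self-contained than the paper's bare citation; the only thing to keep tidy, as you note, is the left/right bookkeeping of the scalar $j$, which you have handled correctly throughout.
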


\medskip
\noindent
The $n$ right coneigenvalues of $A \in M_n(\mathbb{H})$ of the form $\lambda_k =a_k+b_kj$ with 
$a_k$, $b_k \in \mathbb{R}$ and $a_k \geq 0$ in $(4)$  of Proposition \ref{prop-1} are called 
the $n$-basal right coneigenvalues of $A$. Any quaternion which is a right coneigenvalue of 
$A$ is consimilar to one of the basal coneigenvalues of $A$. It is important to note that by 
Proposition \ref{prop-1} $(4)$, right coneigenvalues always exist for quaternion matrices 
and there are exactly $n$ (the size of the matrix) basal right coneigenvalues. This need 
not be true for complex matrices when they are considered as elements of $M_n(\mathbb{C})$ but 
not of $M_n(\mathbb{H})$; for example, the matrix $\begin{bmatrix}
	0 & -1 \\
	1 & 0
\end{bmatrix}$ has no complex right coneigenvalues or complex right coneigenvectors. We wish 
to point out that in the proof of Theorem $3$ of \cite{Liping}, the author actually proves that 
$A$ is consimilar to a canonical form, where the diagonal blocks of the canonical form are of the 
form $\begin{bmatrix}
	\lambda_k & -j & & \\
	& \lambda_k & \ddots & \\
	& & \ddots & -j \\
	& & & \lambda_k 
\end{bmatrix}$ and not of the form $J_{n_k}(\lambda_k) =\begin{bmatrix}
	\lambda_k & 1 & & \\
	& \lambda_k & \ddots & \\
	& & \ddots & 1 \\
	& & & \lambda_k 
\end{bmatrix}$ as claimed in the statement of  Theorem $3$ of \cite{Liping}. 
Nevertheless, one can continue the same proof (as in proof of Theorem $3$ of \cite{Liping}) and use 
Proposition \ref{prop-1} $(3)$ to prove that $A$ is consimilar to the Jordan canonical form 
with the Jordan blocks $J_{n_k}(\lambda_k)$. The proof of Theorem $3$ of \cite{Liping} gives 
us a method to obtain the basal right coneigenvalues of $A \in M_n(\mathbb{H})$ using the 
standard eigenvalues of $jA$. We state this as a lemma below.

\medspace
\begin{lemma}\label{lem-computing basal coneigenvalues}
The $n$ quaternions $\lambda_1 = a_1+b_1 j, \lambda_2 = a_2+b_2 j, 
\ldots, \lambda_n = a_n+b_n j$ are basal right coneigenvalues of $A \in M_n(\mathbb{H})$ if 
and only if $\widehat{\lambda}_1 =-b_1+a_1i = (i+j)j\lambda_1(i+j)^{-1}, \widehat{\lambda}_2 
=-b_2+a_2i = (i+j)j\lambda_2(i+j)^{-1}, \ldots, \widehat{\lambda}_n =-b_n+a_ni = 
(i+j)j\lambda_n(i+j)^{-1}$ are standard eigenvalues of $jA$, where $a_k$, $b_k \in \mathbb{R}$ 
and $a_k \geq 0$.
\end{lemma}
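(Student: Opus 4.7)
The plan is to use part (7) of Proposition~\ref{prop-1}, which provides the bridge between right coneigenvalues of $A$ and right eigenvalues of $jA$: a quaternion $\lambda$ is a right coneigenvalue of $A$ if and only if $j\lambda$ is a right eigenvalue of $jA$. Once this is in hand, the statement becomes a translation between the normalizations used for ``basal'' right coneigenvalues (of the form $a+bj$ with $a\ge0$) and for ``standard'' right eigenvalues (complex numbers with non-negative imaginary part), and the explicit conjugator $(i+j)$ is the witness of this translation.

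First I would carry out the direct computation showing that, for any basal representative $\lambda_k=a_k+b_kj$ with $a_k\ge 0$,
\[
(i+j)\,j\lambda_k\,(i+j)^{-1}\;=\;-b_k+a_ki.
\]
Since $|i+j|^2=2$, we have $(i+j)^{-1}=\tfrac12(-i-j)$, so this is a routine (but somewhat careful) application of the Hamilton relations; it reduces to $j\lambda_k = -b_k+a_kj$ and the identity $(i+j)(-b_k+a_kj)(-i-j)/2 = -b_k+a_ki$. The resulting complex number $\widehat\lambda_k=-b_k+a_ki$ has imaginary part $a_k\ge 0$, which is exactly the condition defining a standard eigenvalue.

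With this similarity computed, the forward direction proceeds as follows. If $\lambda_k$ ($k=1,\dots,n$) are the basal right coneigenvalues of $A$, then by Proposition~\ref{prop-1}(7) each $j\lambda_k$ is a right eigenvalue of $jA$. Standard eigenvalues are determined within each similarity class as the unique representative in the closed upper half complex plane, and the identity above exhibits $\widehat\lambda_k$ as precisely that representative of the class of $j\lambda_k$. Counting by multiplicity there are $n$ basal right coneigenvalues on the left and $n$ standard eigenvalues of $jA$ on the right (Theorem~5.4 of \cite{Zhang} together with Proposition~\ref{prop-1}(4)), so the $\widehat\lambda_k$ constitute the full list of standard eigenvalues of $jA$. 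The converse is symmetric: starting from the standard eigenvalues $\widehat\lambda_k=-b_k+a_ki$ of $jA$, the displayed conjugation identity (read backwards) shows that $j\lambda_k$ lies in the same similarity class and hence is itself a right eigenvalue of $jA$, whence $\lambda_k$ is a right coneigenvalue of $A$ by Proposition~\ref{prop-1}(7); since $a_k\ge 0$, the $\lambda_k$ are basal, and counting again makes the list complete.

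The main obstacle is really only the bookkeeping in the quaternion conjugation identity, since noncommutativity forces one to multiply out $(i+j)\,j\lambda_k\,(i+j)^{-1}$ term-by-term using $ij=k$, $ji=-k$, $jk=i$, $kj=-i$, $ki=j$, $ik=-j$. Everything else is an application of results already established: Proposition~\ref{prop-1}(7) for the correspondence, Proposition~\ref{prop-1}(4) for the existence and count of basal right coneigenvalues, and the standard fact (Theorem~5.4 of \cite{Zhang}) that each similarity class of right eigenvalues of a quaternion matrix contains a unique complex representative with non-negative imaginary part.
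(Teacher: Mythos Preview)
Your approach is essentially the one the paper intends: the paper does not supply a separate proof of this lemma but points to the proof of Theorem~3 in \cite{Liping} together with Proposition~\ref{prop-1}(3), which is exactly the coneigenvalue--eigenvalue correspondence $A \stackrel{\text{\tiny c}}{\sim} B \iff jA \sim jB$ combined with the explicit $(i+j)$--conjugation you carry out. Your computation $(i+j)\,j\lambda_k\,(i+j)^{-1}=-b_k+a_ki$ is correct, and the observation that $a_k\ge 0$ forces $\widehat\lambda_k$ into the closed upper half plane is the right reason these are \emph{standard} eigenvalues.

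There is one genuine gap, however: your ``counting'' argument does not control multiplicities. Proposition~\ref{prop-1}(7) is a statement about individual (con)eigenvalues, so from it you only obtain the \emph{set} inclusion $\{\widehat\lambda_1,\ldots,\widehat\lambda_n\}\subseteq\{\text{standard eigenvalues of }jA\}$. If, say, $\lambda_1=\lambda_2$, knowing that $\widehat\lambda_1$ is a standard eigenvalue of $jA$ and that $jA$ has $n$ standard eigenvalues counted with multiplicity does not by itself force the multiplicity of $\widehat\lambda_1$ to be at least~$2$. The fix is to argue at the level of Jordan forms, which is what the paper's reference to \cite{Liping} and Proposition~\ref{prop-1}(3)--(4) really buys: from $A \stackrel{\text{\tiny c}}{\sim} J_A^c$ one gets $jA \sim jJ_A^c$, and $jJ_A^c$ is upper triangular with diagonal entries $j\lambda_1,\ldots,j\lambda_n$ (repeated according to the block sizes). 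The standard eigenvalues of an upper triangular quaternion matrix are the standard representatives of its diagonal entries with the correct multiplicities, and these representatives are precisely your $\widehat\lambda_k$. Inserting this step in place of the bare counting argument closes the gap and makes the multiplicity matching rigorous.
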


\medskip
\noindent
Many results which are valid for right (con)eigenvalues may not hold good for left (con)eigenvalues. 
For instance, a quaternion which is (con)similar to a left (con)eigenvalue of $A$ is in 
general, not a left (con)eigenvalue of $A$. Moreover, there are matrices which do not have left coneigenvalues 
of the form $a+bj$ with $a$, $b \in \mathbb{R}$ and $a\geq 0$. The following example supports both of these 
claims. 

\medspace
\begin{example}\label{example-1}
Let $A = \begin{bmatrix}
		0 & i\\
		-i & 0
\end{bmatrix}$. Let $\lambda = a_0+a_1i+a_2j+a_3k$ be a left coneigenvalue of $A$ and 
$z= \begin{bmatrix}
		z_1 \\
		z_2
\end{bmatrix} \in \mathbb{H}^2$ be a coneigenvector corresponding to $\lambda$. Then 
	$\begin{bmatrix}
		0 & i \\
		-i & 0
\end{bmatrix} \begin{bmatrix}
		z_1 \\
		z_2
\end{bmatrix} = \lambda \begin{bmatrix}
		\widetilde{z_1} \\
		\widetilde{z_2}
\end{bmatrix}$. This implies $iz_2 = \lambda \widetilde{z_1}$ and $-iz_1 = \lambda \widetilde{z_2}$. 
On solving, we have $\lambda i \widetilde{\lambda} = -i$. Note that
\begin{equation*}
		\lambda i \widetilde{\lambda} 
		= (-2a_2 a_3)+(a_0^2 + a_1^2+a_2^2-a_3^2)i +2a_0 a_3j + 2a_1a_3k=-i.
\end{equation*}
Comparing the coefficients, we get 
\begin{equation*}
		a_0^2 + a_1^2+a_2^2-a_3^2 = -1 \ \text{and} \ -2a_2 a_3 =2a_0 a_3=2a_1a_3=0.
\end{equation*}
Note that $1= |-i|=|\lambda i \widetilde{\lambda}| = |\lambda|^2=a_0^2 + a_1^2+a_2^2+a_3^2 $. 
Therefore, 
$-1 =a_0^2 + a_1^2+a_2^2-a_3^2= (a_0^2 + a_1^2+a_2^2+a_3^2)-2a_3^2 = 1-2a_3^2$.
Hence $a_3 = \pm 1$ and $a_0 = a_1 =a_2 =0$. This implies $\lambda = \pm k$. If 
$x =\begin{bmatrix}
		j \\
		1
\end{bmatrix}$, then 
\begin{equation*}
		A\widetilde{x}=\begin{bmatrix}
			0 & i\\
			-i & 0
\end{bmatrix} \begin{bmatrix}
			j \\
			1
\end{bmatrix}=\begin{bmatrix}
			i \\
			-k
\end{bmatrix} =-k \begin{bmatrix}
			j \\
			1
\end{bmatrix} = -kx. 
\end{equation*}
Therefore, $-k$ is a left coneigenvalue of $A$. By taking 
$x=\begin{bmatrix}
		1 \\
		j
\end{bmatrix}$, one can show that $k$ is a left coneigenvalue of $A$ with $x$ being a corresponding 
coneigenvector. Thus $k$ and $-k$ are the only left coneigenvalues of $A$ and they are 
not of the form $a+bj$ with $a \geq 0$. Since $(\widetilde{1+j})^{-1} (-k) (1+j) =i$, the 
quaternion $i$ is consimilar to $-k$. However, it is not a left coneigenvalue of $A$. 
Therefore, a quaternion consimilar to a left coneigenvalue of $A$ need not be a left coneigenvalue of $A$.
\end{example}

\medskip
\noindent
In contrast to Proposition \ref{prop-1} $(7)$, the following example suggests that there 
is no relation between left coneigenvalues of $A \in M_n(\mathbb{H})$ and left eigenvalues of 
$jA$ or $Aj$. 

\medspace
\begin{example}\label{example-2}
Let $A = \begin{bmatrix}
		0 & i\\
		j & 0
\end{bmatrix}$. We show that $\lambda = \displaystyle \frac{1+k}{\sqrt{2}}$ is a left coneigenvalue 
of $A$, whereas $j\lambda$ is not a left eigenvalue of $jA$ and $\lambda j$ is not a left 
eigenvalue of $Aj$. If $x= \begin{bmatrix}
		\displaystyle \frac{-(i+j)}{\sqrt{2}} \\
		1
\end{bmatrix}$, then $\widetilde{x}= \begin{bmatrix}
		\displaystyle \frac{i-j}{\sqrt{2}} \\
		1
\end{bmatrix}$. Consider 
	$Ax=\begin{bmatrix}
		0 & i\\
		j & 0
\end{bmatrix} \begin{bmatrix}
		\displaystyle \frac{-(i+j)}{\sqrt{2}} \\
		1
\end{bmatrix}= \begin{bmatrix}
		i \\
		\displaystyle \frac{1+k}{\sqrt{2}}
\end{bmatrix} =\displaystyle \frac{1+k}{\sqrt{2}} \begin{bmatrix}
		\displaystyle \frac{i-j}{\sqrt{2}} \\
		1
\end{bmatrix} = \lambda \widetilde{x}$.
Therefore, $\lambda$ is a left coneigenvalue of 
$A$. Consider $Aj = \begin{bmatrix}
		0 & k \\
		-1 & 0
\end{bmatrix}$. If $\lambda j = \displaystyle \frac{1+k}{\sqrt{2}} j = \frac{j-i}{\sqrt{2}}$ 
is a left eigenvalue of $Aj$, then there is a nonzero vector
$y=\begin{bmatrix}
		y_1 \\
		y_2
\end{bmatrix} \in \mathbb{H}^2$ such that $\begin{bmatrix}
		0 & k \\
		-1 & 0
\end{bmatrix}\begin{bmatrix}
		y_1 \\
		y_2
\end{bmatrix} = \displaystyle \frac{j-i}{\sqrt{2}} \begin{bmatrix}
		y_1 \\
		y_2
\end{bmatrix}$. This implies $ky_2 = \displaystyle \frac{j-i}{\sqrt{2}} y_1$ and
$-y_1 = \displaystyle \frac{j-i}{\sqrt{2}} y_2$. On simplification, we get $k=1$, 
a contradiction. Similarly, we can show $j \lambda$ is not a left eigenvalue of $j A$.
\end{example}

\medskip
\noindent
The existence of left coneigenvalue of a quaternion matrix is still unknown and beyond the 
scope of this manuscript. We, therefore, restrict our study to right coneigenvalues of quaternion 
matrices. We state below a few important results from \cite{Liping} that are needed for this 
manuscript. 

\medskip
\noindent
Given $A \in M_n(\mathbb{H})$, 
we can express $A=A' +A''j$, where $A'$, $A'' \in M_n(\mathbb{C})$. The complex 
adjoint matrix of $A$ is a $2n \times 2n$ complex block matrix associated with $A$ given 
by $\bigchi_A = 
\begin{bmatrix}
	A' & A'' \\
	-\overline{A''} & \overline{A'}.
\end{bmatrix}$. The Frobenius norm and the spectral norm of $A \in M_n(\mathbb{H})$ are defined 
as $||A||_F = \big(\text{trace}A^{\ast}A\big)^{1/2}$ and
$||A||_2 = \displaystyle \sup_{x \neq 0} \Bigg\{\frac{||Ax||_2}{||x||_2}: 
x \in \mathbb{H}^n \Bigg\}$ respectively. 
We list some fundamental results on matrix $A$ and $\bigchi_A$ without proof.

\medspace
\begin{proposition}\label{Prop-Properties-complex adjoint}
Let $\alpha \in \mathbb{R}$ and $A\in M_n(\mathbb{H})$. Then
\begin{enumerate}
\item $\bigchi_{\alpha A} = \alpha \bigchi_A$.
\item $\bigchi_{A^{\ast}} = (\bigchi_A)^{\ast}$.
\item $\bigchi_A$ is invertible, normal, Hermitian, unitary, or diagonalizable 
if and only if $A$ is invertible, normal, Hermitian, unitary, or diagonalizable, respectively.
\item $\lambda_1, \lambda_2, \dots, \lambda_n$ are standard eigenvalues
of $A$ if and only if $\lambda_1, \lambda_2, \dots, \lambda_n, \\
\overline{\lambda}_1, \overline{\lambda}_2, \dots, \overline{\lambda}_n$ are eigenvalues 
of $\bigchi_A$.
\item $||\bigchi_A||_F = \sqrt{2}||A||_F$ and $||\bigchi_A||_2=||A||_2$.
\item $||jA||_F=||A||_F$ and $||jA||_2=||A||_2$.
\end{enumerate}	
\end{proposition}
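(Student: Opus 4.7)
The strategy is to establish first the multiplicative property $\bigchi_{AB} = \bigchi_A \bigchi_B$, from which almost every part of the proposition will follow by routine manipulation. To prove it I will write $A = A' + A''j$ and $B = B' + B''j$ with the primed matrices in $M_n(\mathbb{C})$, and expand $AB$ using the key identity $jC = \overline{C}\,j$ for every $C \in M_n(\mathbb{C})$ (which in turn follows from $jz = \overline{z}\,j$ for each $z \in \mathbb{C}$). This yields $(AB)' = A'B' - A''\overline{B''}$ and $(AB)'' = A'B'' + A''\overline{B'}$, and a direct block-matrix product confirms that these are precisely the $(1,1)$ and $(1,2)$ blocks of $\bigchi_A \bigchi_B$, while the other two blocks follow by entrywise conjugation. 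The map $A \mapsto \bigchi_A$ is clearly injective from the uniqueness of the decomposition $A = A' + A''j$.

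Part (1) is immediate since $\alpha \in \mathbb{R}$ commutes with $j$ and with complex conjugation. For part (2), I will first verify the scalar identity $\overline{a + bj} = \overline{a} - bj$ for $a, b \in \mathbb{C}$, use it to derive $A^{\ast} = (A')^{\ast} - (A'')^T j$, and then check the block equality $\bigchi_{A^{\ast}} = (\bigchi_A)^{\ast}$ entry by entry. Given (1) and (2) together with the homomorphism, part (3) is routine for normality, Hermitian-ness, and unitarity: for example $AA^{\ast} = A^{\ast}A$ translates via the ring homomorphism to $\bigchi_A(\bigchi_A)^{\ast} = (\bigchi_A)^{\ast}\bigchi_A$. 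Invertibility follows either from the multiplicative property combined with injectivity of $\bigchi$, or alternatively via part (4) by noting that $0$ is a standard eigenvalue of $A$ iff $0$ is an eigenvalue of $\bigchi_A$.

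The main obstacle is diagonalizability. For the forward direction, the key observation is that by a further conjugation with a diagonal quaternion matrix, the diagonal entries of $D$ may be taken to be complex numbers with non-negative imaginary part (their standard eigenvalues); then $D = D' + 0\cdot j$ and $\bigchi_D = D \oplus \overline{D}$ is genuinely diagonal in $M_{2n}(\mathbb{C})$, so $\bigchi_A = \bigchi_S \bigchi_D \bigchi_S^{-1}$ is diagonalizable. For the converse, I will appeal to the Jordan canonical form of $A$ over $\mathbb{H}$ under ordinary similarity: writing $P^{-1}AP = J$ with $J$ a direct sum of blocks $J_{n_k}(\mu_k)$, $\mu_k \in \mathbb{C}$, one sees that $\bigchi_J$ is complex block diagonal with blocks $J_{n_k}(\mu_k) \oplus \overline{J_{n_k}(\mu_k)}$, and these are complex-diagonalizable only when $n_k = 1$, forcing $J$ itself to be diagonal.

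Part (4) follows from the standard ansatz: if $Ax = x\lambda$ with $\lambda \in \mathbb{C}$ and $\mathrm{Im}(\lambda) \geq 0$, writing $x = x' + x''j$ and using $j\lambda = \overline{\lambda}\,j$ shows that $\begin{bmatrix} x' \\ -\overline{x''} \end{bmatrix}$ is an eigenvector of $\bigchi_A$ with eigenvalue $\lambda$, while the involution $\begin{bmatrix} u \\ w \end{bmatrix} \mapsto \begin{bmatrix} -\overline{w} \\ \overline{u} \end{bmatrix}$ commutes with $\bigchi_A$ up to complex conjugation of eigenvalues, forcing the spectrum to come in conjugate pairs and giving exactly the $2n$ numbers $\lambda_1,\ldots,\lambda_n,\overline{\lambda}_1,\ldots,\overline{\lambda}_n$. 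For (5), the Frobenius identity follows from $\|A\|_F^2 = \|A'\|_F^2 + \|A''\|_F^2$ combined with the evident $\|\bigchi_A\|_F^2 = 2(\|A'\|_F^2 + \|A''\|_F^2)$; the spectral identity follows from $\bigchi_A^{\ast}\bigchi_A = \bigchi_{A^{\ast}A}$ together with (4) applied to the Hermitian matrix $A^{\ast}A$, whose real standard eigenvalues coincide with their conjugates, so that $\|\bigchi_A\|_2^2 = \lambda_{\max}(A^{\ast}A) = \|A\|_2^2$. Part (6) reduces to the identity $(jA)^{\ast}(jA) = A^{\ast}\overline{j}\,jA = A^{\ast}A$, since $\overline{j}\,j = -j^2 = 1$, yielding identical singular values and Frobenius norms.
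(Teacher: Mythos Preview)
The paper does not prove this proposition: it explicitly introduces it with the sentence ``We list some fundamental results on matrix $A$ and $\bigchi_A$ without proof,'' treating these facts as standard background (they appear, for instance, in Zhang's survey \cite{Zhang} and Rodman's monograph \cite{Rodman}). So there is no in-paper argument to compare against.

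Your outline is correct and follows the standard route. Establishing the ring homomorphism $\bigchi_{AB}=\bigchi_A\bigchi_B$ first is exactly how these properties are usually derived, and your computations of $(AB)'$ and $(AB)''$ via $jC=\overline{C}\,j$ are right. Parts (1), (2), (5), and (6) are handled cleanly; in particular the observation $(jA)^{\ast}(jA)=A^{\ast}A$ is the efficient way to get (6). For (3), your treatment of diagonalizability is the only place requiring real care, and your plan---reduce to a complex diagonal $D$ for the forward direction, and use the quaternionic Jordan form for the converse, noting that $\bigchi_{J_{n_k}(\mu_k)}=J_{n_k}(\mu_k)\oplus J_{n_k}(\overline{\mu_k})$ is diagonalizable only when $n_k=1$---is sound. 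One small point worth making explicit: in the converse direction for invertibility you rely on the inverse of $\bigchi_A$ lying in the image of $\bigchi$, or else on the kernel argument you allude to; either works, but the sentence ``multiplicative property combined with injectivity'' alone does not quite give the converse without one of these.
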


\section{Main results}\label{sec-3}

This section contains the main results of the manuscript. In this section, we study localization 
theorems and perturbation results for coneigenvalues of quaternion matrices. In particular, 
we prove a Ger\v{s}gorin type theorem, a Bauer-Fike type theorem, a Hoffman-Wielandt type theorem 
and Elsner's inequality for coneigenvalues of quaternion matrices.

\subsection{Ger\v{s}gorin type results for coneigenvalues of quaternion matrices} 
\hspace*{\fill}\label{sec-3.1}

Ger\v{s}gorin theorem (Theorem $6.1.1$, \cite{Horn-Johnson}) is an important localization 
result for complex matrices. We state and prove below a Ger\v{s}gorin type result for coneigenvalues 
of quaternion matrices. We start the section with some notations. Let $A= (q_{ij}) \in M_n(\mathbb{H})$. 
Define $R_i(A) := \displaystyle \sum_{j=1 \atop j \neq i}^{n} |q_{ij}|$, $1 \leq i \leq n$. 
The sets $G_i := \{ p \in \mathbb{H} : |p-q_{ii}| \leq R_i(A) \}$, $1 \leq i \leq n$ are 
called the Ger\v{s}gorin balls (analogous to the Ger\v{s}gorin discs in the complex plane) 
corresponding to $A$. Given $A \in M_n(\mathbb{H})$, its coneigenvalues need not lie in
the union of Ger\v{s}gorin balls of $A$. For example, consider $A = \begin{bmatrix}
	1 & 0 \\
	0 & 1
\end{bmatrix}$. The union of Ger\v{s}gorin balls of $A$ is the empty set. However, $-1$ is both 
left and right coneigenvalue of $A$, which does not lie in the union of Ger\v{s}gorin balls 
of $A$. We prove below that some right coneigenvalue similar to a given right coneigenvalue belongs 
to the union of Ger\v{s}gorin balls. 

\medspace
\begin{theorem}\label{Thm- Gersgorin for right coneigenvalues}
Let $A = (q_{ij}) \in M_n(\mathbb{H})$ and let $\lambda \in \mathbb{H}$ be a right coneigenvalue of
$A$. Then $\widetilde{\alpha} \lambda 
\alpha^{-1}$ (which is also a right coneigenvalue) lies in the union of Ger\v{s}gorin balls 
of $A$ for some nonzero $\alpha \in \mathbb{H}$.
\end{theorem}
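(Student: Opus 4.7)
My plan is to adapt the classical Ger\v{s}gorin argument to the coneigenvalue equation $A\widetilde{x} = x\lambda$, paying attention to the noncommutativity that forces one to conjugate $\lambda$ rather than recover $\lambda$ itself.

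First I fix a nonzero right coneigenvector $x = (x_1,\ldots,x_n)^T \in \mathbb{H}^n$ with $A\widetilde{x} = x\lambda$, and choose an index $i$ realizing $|x_i| = \max_{1 \leq k \leq n} |x_k| > 0$. The $i$-th coordinate of $A\widetilde{x} = x\lambda$ reads $\sum_{j=1}^n q_{ij}\widetilde{x}_j = x_i\lambda$, which I rearrange as
\[
  q_{ii}\widetilde{x}_i - x_i\lambda \;=\; -\sum_{j \neq i} q_{ij}\widetilde{x}_j.
\]
Noncommutativity prevents factoring $\widetilde{x}_i$ on the left, but I can factor it on the right: the left-hand side equals $\bigl(q_{ii} - x_i\lambda\widetilde{x}_i^{-1}\bigr)\widetilde{x}_i$.

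Taking moduli and using $|\widetilde{x}_i| = |x_i|$ (since $\widetilde{q} = -jqj$ preserves modulus) together with the maximal-modulus choice of $i$, I get
\[
  \bigl|q_{ii} - x_i\lambda\widetilde{x}_i^{-1}\bigr|\,|x_i| \;\leq\; \sum_{j \neq i} |q_{ij}|\,|\widetilde{x}_j| \;=\; \sum_{j \neq i} |q_{ij}|\,|x_j| \;\leq\; R_i(A)\,|x_i|.
\]
Dividing by $|x_i|$ shows that $\mu := x_i\lambda\widetilde{x}_i^{-1}$ satisfies $|\mu - q_{ii}| \leq R_i(A)$, i.e. $\mu \in G_i$.

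Finally I recognise $\mu$ in the form demanded by the statement. Setting $\alpha := \widetilde{x}_i$, one has $\widetilde{\alpha} = x_i$ and $\alpha^{-1} = \widetilde{x}_i^{-1}$, hence $\widetilde{\alpha}\lambda\alpha^{-1} = x_i\lambda\widetilde{x}_i^{-1} = \mu$. By Proposition~\ref{prop-1}(6), every element of $\{\widetilde{\beta}^{-1}\lambda\beta : \beta \in \mathbb{H}\setminus\{0\}\}$ is a right coneigenvalue of $A$; substituting $\beta = \alpha^{-1}$ and using $\widetilde{\alpha^{-1}} = \widetilde{\alpha}^{-1}$ rewrites this family as $\{\widetilde{\alpha}\lambda\alpha^{-1} : \alpha \in \mathbb{H}\setminus\{0\}\}$, so $\mu$ is a right coneigenvalue of $A$ lying in $G_i \subseteq \bigcup_{k=1}^n G_k$.

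The only real subtlety is the algebraic one: because $\lambda$ sits on the right of $x_i$ in the eigenequation while $q_{ii}$ naturally sits on the left of $\widetilde{x}_i$, one cannot cancel $x_i$ and recover $\lambda$ directly; the natural residue is the consimilar quantity $x_i\lambda\widetilde{x}_i^{-1}$. This is exactly what forces the weaker conclusion ``some $\widetilde{\alpha}\lambda\alpha^{-1}$ lies in $\bigcup_k G_k$'' instead of ``$\lambda$ itself''; once this is accepted, the modulus estimate is simply the quaternionic incarnation of the classical Ger\v{s}gorin inequality.
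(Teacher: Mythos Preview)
Your proof is correct and follows essentially the same Ger\v{s}gorin-style argument as the paper: pick the coordinate of maximal modulus in a coneigenvector, isolate the off-diagonal sum, and factor to expose a quantity of the form $\widetilde{\alpha}\lambda\alpha^{-1}$. The only cosmetic difference is that you start from the form $A\widetilde{x}=x\lambda$ and factor out $\widetilde{x}_i$ on the right, while the paper starts from the equivalent form $Ax=\widetilde{x}\lambda$ and factors out $x_p$ on the right, arriving at $\widetilde{x}_p\lambda x_p^{-1}$ directly with $\alpha=x_p$; these are interchangeable via the substitution $x\leftrightarrow\widetilde{x}$.
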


\begin{proof}
Since $\lambda$ is a right coneigenvalue of $A$, we have $Ax = \widetilde{x}\lambda$ for some 
nonzero vector $x=[x_1, x_2, \ldots, x_n]^T \in \mathbb{H}^n$. Let 
$|x_p| = \max \{ |x_i|: 1 \leq i \leq n\}$ then $x_p \neq 0$. On equating the $p$th entries of 
$Ax = \widetilde{x}\lambda$, we have $ \displaystyle \sum_{j=1}^{n}q_{pj}x_j = \widetilde{x}_p \lambda$, 
which in turn gives $ \displaystyle \sum_{j=1 \atop j \neq p}^{n}q_{pj}x_j = 
\widetilde{x}_p \lambda -q_{pp} x_p$. Taking modulus on both the sides we get, 
\begin{equation*}
		|\widetilde{x}_p \lambda -q_{pp} x_p| = \Bigg|\displaystyle \sum_{j=1 \atop j \neq p}^{n}q_{pj}x_j \Bigg|.
\end{equation*}
A simple application of the triangular inequality yields
\begin{equation*}
		|\widetilde{x}_p \lambda x_p^{-1} x_p -q_{pp} x_p| \leq \displaystyle 
		\sum_{j=1 \atop j \neq p}^{n} |q_{pj}| |x_j| \leq |x_p| \sum_{j=1 \atop j \neq p}^{n} |q_{pj}|.
\end{equation*}
We thus obtain
\begin{equation*}
		|\widetilde{x}_p \lambda x_p^{-1} -q_{pp}|\leq \sum_{j=1 \atop j \neq p}^{n} |q_{pj}| =
		R_p(A). 
\end{equation*}
Hence $\widetilde{x}_p \lambda x_p^{-1}$ belongs to the union of Ger\v{s}gorin balls of $A$.
\end{proof}

\medskip
\noindent
Ger\v{s}gorin type result can also be obtained using the columns of $A$. 
Since the eigenvalues of a complex matrix and its transpose are the same, the Ger\v{s}gorin 
theorem for eigenvalues of complex matrices is also proved using the deleted absolute column sum. 
However, for quaternion matrices, right coneigenvalues (as well as right eigenvalues) of 
$A$ and $A^T$ need not be the same. The example that follows illustrates this. 

\medspace
\begin{example}
Consider $A=\begin{bmatrix}
		-j & k \\
		1 & -i
\end{bmatrix}$. $\displaystyle \frac{-1+\sqrt{3}i}{\sqrt{2}}$ 
and $\displaystyle \frac{1+\sqrt{3}i}{\sqrt{2}}$ are the standard eigenvalues of $A$, whereas the 
standard eigenvalues of $A^T$ are $0$ and $\sqrt{2}i$. Similarly, the basal right coneigenvalues of 
$A$ are approximately equal to $1.366+0.366j$, $0.366-1.366j$ and that of $A^T$ are $0$ and $1-j$. 
\end{example}

\medskip
\noindent
Consequently, we require the following lemma, which provides a relationship between the right 
coneigenvalues of $A$ and that of $A^{\ast}$, in order to derive a Ger\v{s}gorin type result  
using columns of $A$.

\medspace
\begin{lemma}\label{lem-coneigenvalues of A*}
If $\lambda$ is a right coneigenvalue of $A$, then $\overline{\lambda}$ is a right coneigenvalue 
of $A^{\ast}$.
\end{lemma}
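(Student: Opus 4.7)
My plan is to convert the statement about right coneigenvalues into one about ordinary right eigenvalues using Proposition \ref{prop-1}(7), prove the analogous fact at the level of right eigenvalues using the complex adjoint, and then translate back. Concretely, by Proposition \ref{prop-1}(7), $\lambda$ is a right coneigenvalue of $A$ iff $\lambda j$ is a right eigenvalue of $Aj$, while $\overline{\lambda}$ is a right coneigenvalue of $A^{\ast}$ iff $j\overline{\lambda}$ is a right eigenvalue of $jA^{\ast}$. It therefore suffices to prove: if $\lambda j$ is a right eigenvalue of $Aj$, then $j\overline{\lambda}$ is a right eigenvalue of $jA^{\ast}$.

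The crux is an auxiliary fact: for any $M \in M_n(\mathbb{H})$, if $\mu$ is a right eigenvalue of $M$, then $\overline{\mu}$ is a right eigenvalue of $M^{\ast}$. Applying this to $M = Aj$ with $\mu = \lambda j$, one gets $\overline{\lambda j} = -j\overline{\lambda}$ as a right eigenvalue of $(Aj)^{\ast} = -jA^{\ast}$; negating both the matrix and the eigenvalue then gives $j\overline{\lambda}$ as a right eigenvalue of $jA^{\ast}$, completing the chain. To prove the auxiliary fact, I would invoke Proposition \ref{Prop-Properties-complex adjoint}: part (2) gives $\bigchi_{M^{\ast}} = (\bigchi_M)^{\ast}$, so $\sigma(\bigchi_{M^{\ast}})$ consists of the complex conjugates of the elements of $\sigma(\bigchi_M)$; part (4) shows that $\sigma(\bigchi_M)$ is itself closed under complex conjugation, so $\sigma(\bigchi_{M^{\ast}}) = \sigma(\bigchi_M)$. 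Thus $M$ and $M^{\ast}$ have the same set of standard eigenvalues. Since the right eigenvalues of a quaternion matrix form the union of the quaternion-similarity classes of its standard eigenvalues, and since $\overline{\mu}$ has the same real part and modulus as $\mu$ (hence is quaternion-similar to any standard eigenvalue $\mu$ is similar to), the auxiliary fact follows.

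The main obstacle is the auxiliary fact itself. The naive attempt of taking the conjugate transpose of $Mx = x\mu$ produces $x^{\ast} M^{\ast} = \overline{\mu} x^{\ast}$, which only tells us that $\overline{\mu}$ is a \emph{left} eigenvalue of $M^{\ast}$; since left and right eigenvalues of a quaternion matrix are genuinely different in general, this direct route does not yield a right-eigenvalue statement, and one is forced to pass through standard eigenvalues and the conjugate-pair structure of $\sigma(\bigchi_M)$ from Proposition \ref{Prop-Properties-complex adjoint}(4) to move from left-eigenvalue information to right-eigenvalue information.
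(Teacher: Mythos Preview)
Your proposal is correct and follows essentially the same approach as the paper: convert the coneigenvalue statement to an eigenvalue statement via Proposition \ref{prop-1}(7), then use the complex adjoint (Proposition \ref{Prop-Properties-complex adjoint}(2) and (4)) to see that a quaternion matrix and its conjugate transpose have the same standard eigenvalues, and finally use that $\mu$ and $\overline{\mu}$ are quaternion-similar to translate back. The only cosmetic difference is that the paper works with $jA$ and $(jA)^{\ast}=-A^{\ast}j$ throughout, whereas you start from $Aj$, pass to $(Aj)^{\ast}=-jA^{\ast}$, and then negate to land in $jA^{\ast}$; your explicit isolation of the auxiliary fact ``$\mu$ a right eigenvalue of $M$ $\Rightarrow$ $\overline{\mu}$ a right eigenvalue of $M^{\ast}$'' makes the structure slightly cleaner, but the substance is the same.
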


\begin{proof}
Let $\lambda$ be a right coneigenvalue of $A$. Then by Proposition \ref{prop-1} $(7)$, $j\lambda$ 
is a right eigenvalue of $jA$. Let $j\lambda$ be similar to a standard right eigenvalue, say 
$\mu=a+bi$ of $jA$, where $a$, $b \in \mathbb{R}$ with $b \geq 0$. By Proposition 
\ref{Prop-Properties-complex adjoint} $(4)$, $\mu$ and $\overline{\mu}$ are eigenvalues of the complex 
adjoint matrix $\bigchi_{jA}$ of $A$. This implies $\mu$, $\overline{\mu}$ are eigenvalues of 
the complex matrix $(\bigchi_{jA})^{\ast}=\bigchi_{(jA)^{\ast}}$. Again, by Proposition 
\ref{Prop-Properties-complex adjoint} $(4)$, we see that $\mu=a+bi$ is a standard eigenvalue 
of $(jA)^{\ast}$. Since $\mu$ is similar to $j\lambda$, and $j\lambda$ is similar to $\overline{j\lambda}$, 
we have $\mu$ is similar to $\overline{j\lambda}$. Therefore, $\overline{j\lambda}$ is also a right 
eigenvalue of $(jA)^{\ast}=-A^{\ast}j$. Hence there exists a nonzero vector $x \in \mathbb{H}^n$ such that 
$-A^{\ast}jx=x\overline{j\lambda}$, which implies $A^{\ast}\widetilde{x}=x\overline{\lambda}$. Therefore, 
$\overline{\lambda}$ is a right coneigenvalue of $A^{\ast}$.  
\end{proof}

\medskip
\noindent
We now prove a Ger\v{s}gorin type result using the columns of $A$ by applying Theorem 
\ref{Thm- Gersgorin for right coneigenvalues} to $A^{\ast}$.

\medspace
\begin{theorem}\label{Thm- Gersgorin for right coneigenvalues-column sum}
Let $A = (q_{ij}) \in M_n(\mathbb{H})$ and let $\lambda \in \mathbb{H}$ be a right coneigenvalue of
$A$. Then there is a nonzero $\alpha \in \mathbb{H}$ such that $\alpha^{-1} \lambda \widetilde{\alpha}$ 
(which is also a right coneigenvalue) lies in the union of $n$ balls 
$H_i=\{p \in \mathbb{H} : |p-q_{ii}| \leq C_i(A)\}$ 
of $A$, where $C_i(A)=\displaystyle \sum_{j=1 \atop j \neq i}^{n} |q_{ji}|, 1 \leq i \leq n$.
\end{theorem}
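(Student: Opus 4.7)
The plan is to reduce to Theorem \ref{Thm- Gersgorin for right coneigenvalues} applied to $A^{\ast}$, exploiting the correspondence provided by Lemma \ref{lem-coneigenvalues of A*}. This is precisely the strategy hinted at in the paragraph preceding the statement: the $(i,i)$ entry of $A^{\ast}$ is $\overline{q_{ii}}$ and its $i$-th deleted row absolute sum is $\sum_{j\neq i}|\overline{q_{ji}}|=\sum_{j\neq i}|q_{ji}|=C_i(A)$, so the Ger\v{s}gorin balls of $A^{\ast}$ are exactly $\{p\in\mathbb{H}:|p-\overline{q_{ii}}|\leq C_i(A)\}$, i.e.\ the conjugates of the sets $H_i$ in the statement.

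First I would invoke Lemma \ref{lem-coneigenvalues of A*} to pass from the right coneigenvalue $\lambda$ of $A$ to the right coneigenvalue $\overline{\lambda}$ of $A^{\ast}$. Then I would apply Theorem \ref{Thm- Gersgorin for right coneigenvalues} to $A^{\ast}$ and $\overline{\lambda}$: there exists a nonzero $\beta\in\mathbb{H}$ and an index $i$ such that
\begin{equation*}
\bigl|\,\widetilde{\beta}\,\overline{\lambda}\,\beta^{-1}-\overline{q_{ii}}\,\bigr|\leq C_i(A).
\end{equation*}
Conjugation preserves the modulus of a difference (since $|\overline{z}-\overline{w}|=|z-w|$ in $\mathbb{H}$), so taking quaternion conjugates of both quaternions inside the modulus yields
\begin{equation*}
\bigl|\,\overline{\widetilde{\beta}\,\overline{\lambda}\,\beta^{-1}}-q_{ii}\,\bigr|\leq C_i(A).
\end{equation*}

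The next step is a short algebraic manipulation using the identities $\overline{pq}=\overline{q}\,\overline{p}$, $\overline{p^{-1}}=(\overline{p})^{-1}$, and $\overline{\widetilde{p}}=\widetilde{\overline{p}}$ from Section \ref{sec-2}:
\begin{equation*}
\overline{\widetilde{\beta}\,\overline{\lambda}\,\beta^{-1}}
=\overline{\beta^{-1}}\,\lambda\,\overline{\widetilde{\beta}}
=(\overline{\beta})^{-1}\,\lambda\,\widetilde{\overline{\beta}}.
\end{equation*}
Setting $\alpha:=\overline{\beta}$ (which is nonzero since $\beta$ is), this expression is precisely $\alpha^{-1}\lambda\,\widetilde{\alpha}$, and the inequality becomes $|\alpha^{-1}\lambda\,\widetilde{\alpha}-q_{ii}|\leq C_i(A)$, so $\alpha^{-1}\lambda\,\widetilde{\alpha}\in H_i$.

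Finally, I would verify that $\alpha^{-1}\lambda\,\widetilde{\alpha}$ is itself a right coneigenvalue of $A$: applying Proposition \ref{prop-1}(6) with $\gamma:=\widetilde{\alpha}$ (so $\widetilde{\gamma}=\alpha$ by involutivity of the $j$-conjugation), we obtain $\widetilde{\gamma}^{-1}\lambda\gamma=\alpha^{-1}\lambda\,\widetilde{\alpha}$, which is a right coneigenvalue. I do not expect any serious obstacle here; the only small subtlety is keeping track of the order reversal under conjugation of products and the interchange between $\overline{(\cdot)}$ and $\widetilde{(\cdot)}$, but these are exactly the identities recorded at the start of Section \ref{sec-2}.
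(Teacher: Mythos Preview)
Your proposal is correct and follows essentially the same route as the paper: invoke Lemma~\ref{lem-coneigenvalues of A*} to pass to $\overline{\lambda}$ as a right coneigenvalue of $A^{\ast}$, apply Theorem~\ref{Thm- Gersgorin for right coneigenvalues} to $A^{\ast}$, then conjugate and set $\alpha=\overline{\beta}$. Your write-up is slightly more explicit than the paper's (you spell out the conjugation identities and separately check via Proposition~\ref{prop-1}(6) that $\alpha^{-1}\lambda\widetilde{\alpha}$ is again a right coneigenvalue), but the argument is the same.
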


\begin{proof}
Since $\lambda$ is a right coneigenvalue of $A$, by Lemma \ref{lem-coneigenvalues of A*}, we see that  
$\overline{\lambda}$ is a right coneigenvalue of $A^{\ast}$. By Theorem 
\ref{Thm- Gersgorin for right coneigenvalues}, there exists a nonzero quaternion $\beta$ such that 
\begin{equation*}
		|\widetilde{\beta} \overline{\lambda} {\beta}^{-1}-\overline{q}_{ii}| 
		\leq \displaystyle \sum_{j=1 \atop j \neq i}^{n} |\overline{q}_{ji}|.
\end{equation*}
This implies
\begin{equation*}
		\big|\overline{\beta}^{-1} \lambda \widetilde{\overline{\beta}}- q_{ii} \big| \leq 
		\displaystyle 
		\sum_{j=1 \atop j \neq i}^{n} |q_{ji}|.
\end{equation*} 
On taking $\alpha = \overline{\beta}$, we have 
\begin{equation*}
		|\alpha^{-1} \lambda \widetilde{\alpha}- q_{ii}| \leq  \displaystyle 
		\sum_{j=1 \atop j \neq i}^{n} |q_{ji}| = C_i(A).
\end{equation*}
This completes the proof.
\end{proof}

\medskip
\noindent
We now state a Ger\v{s}gorin type theorem for left coneigenvalues of quaternion matrices.

\medspace
\begin{theorem}\label{Thm- Gersgorin for left coneigenvalues}
Let $A = (q_{ij}) \in M_n(\mathbb{H})$ and let $\lambda \in \mathbb{H}$ be a left coneigenvalue of
$A$. Then there is a nonzero quaternion $q$ (consimilar to $1$) such that 
$\lambda q$ lies in the union of Ger\v{s}gorin balls of $A$.
\end{theorem}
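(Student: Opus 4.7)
The plan is to adapt the Gershgorin proof of Theorem \ref{Thm- Gersgorin for right coneigenvalues} to the defining relation $Ay = \lambda \widetilde{y}$ of a left coneigenvalue. I would begin by choosing a coneigenvector $y = [y_1, \ldots, y_n]^T \in \mathbb{H}^n$ corresponding to $\lambda$ and an index $p$ with $|y_p| = \max_{i} |y_i|$, which forces $y_p \neq 0$. Reading off the $p$-th coordinate of $Ay = \lambda \widetilde{y}$ produces
\begin{equation*}
\sum_{j=1}^{n} q_{pj} y_j = \lambda \widetilde{y}_p.
\end{equation*}
Moving the diagonal term $q_{pp} y_p$ to the other side and pulling $y_p$ out on the right of the resulting expression gives $(\lambda \widetilde{y}_p y_p^{-1} - q_{pp}) y_p = \sum_{j \neq p} q_{pj} y_j$. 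The usual triangle-inequality/maximum-coordinate estimate then produces $|\lambda q - q_{pp}| \leq R_p(A)$, where I set $q := \widetilde{y}_p y_p^{-1}$, so $\lambda q \in G_p$.

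To close the argument I would verify that $q = \widetilde{y}_p y_p^{-1}$ is consimilar to $1$. Choosing $s := y_p^{-1}$, the multiplicativity of the $j$-conjugate (Proposition \ref{prop-1}) yields $\widetilde{s^{-1}} = (\widetilde{s})^{-1}$, and therefore $\widetilde{s}^{\,-1} \cdot 1 \cdot s = \widetilde{y}_p y_p^{-1} = q$, exhibiting $q$ as the consimilar transform of $1$ by $s$.

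There is no computational obstacle; the only conceptual point, and the reason the theorem is phrased in terms of the product $\lambda q$ instead of a quaternion consimilar to $\lambda$, is the phenomenon observed in Example \ref{example-1}: the consimilarity class of a left coneigenvalue generally contains quaternions that are not left coneigenvalues. Hence one cannot relocate $\lambda$ itself into the Gershgorin union as is done for right coneigenvalues in Theorem \ref{Thm- Gersgorin for right coneigenvalues}. The non-commutativity of $\mathbb{H}$ forces the factor $y_p$ to appear on the right of $\lambda$, which naturally yields the asymmetric scalar $\widetilde{y}_p y_p^{-1}$; this scalar is consimilar to $1$ but cannot in general be absorbed into $\lambda$.
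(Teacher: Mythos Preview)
Your proof is correct and follows essentially the same argument as the paper: both pick a coneigenvector, isolate the coordinate of maximal modulus, and extract the factor $q=\widetilde{y}_p y_p^{-1}$ from the $p$-th equation via the triangle inequality. You additionally supply the explicit verification that $q$ is consimilar to $1$ (via $s=y_p^{-1}$), a detail the paper asserts in the statement but does not spell out in its proof.
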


\begin{proof}
Given the left coneigenvalue $\lambda$, let $x = [x_1, x_2, \ldots, x_n]^T ~\in \mathbb{H}^n $ 
be a coneigenvector corresponding to it; that 
is, $Ax = \lambda \widetilde{x}$. Let $|x_p| = \max \{|x_i| : 1 \leq i \leq n\}$. On equating 
the $p$th entry of $Ax = \lambda \widetilde{x}$, we have $\displaystyle \lambda \widetilde{x}_p  
= \sum_{j=1}^{n}q_{pj}x_j$, which then implies $\lambda \widetilde{x}_p - q_{pp} x_p  = \displaystyle 
\sum_{j=1 \atop j \neq p}^{n} q_{pj}x_j$. Taking modulus on both the sides, we get 
\begin{equation*}
		|\lambda \widetilde{x}_p -q_{pp} x_p| = \Bigg|\displaystyle 
		\sum_{j=1 \atop j \neq p}^{n}q_{pj}x_j \Bigg|. 
\end{equation*}
An application of the triangular inequality yields 
\begin{equation*}
		|\lambda \widetilde{x}_p x_p^{-1} x_p -q_{pp} x_p| \leq \displaystyle 
		\sum_{j=1 \atop j \neq p}^{n} |q_{pj}| |x_j| \leq |x_p| \sum_{j=1 \atop j \neq p}^{n} |q_{pj}|. 
\end{equation*} 
Therefore, we have the following inequality
	\begin{equation*}
		|\lambda \widetilde{x}_p x_p^{-1} -q_{pp}|\leq \sum_{j=1 \atop j \neq p}^{n} |q_{pj}| =
		R_p(A).
\end{equation*}
Thus, $\lambda q$, where $q = \widetilde{x}_p x_p^{-1}$, lies in the union of 
Ger\v{s}gorin balls of $A$. This completes the proof.
\end{proof}

\medskip
\noindent
The Ger\v{s}gorin's theorem for complex matrices consists of two parts. The first
part gives the location of the eigenvalues of a given complex matrix in the union of simple sets
called the Ger\v{s}gorin disc. The second part gives the number of eigenvalues present 
in the union of $k$ connected Ger\v{s}gorin discs when they are disjoint from the remaining discs
(Theorem $6.1.1$ \cite{Horn-Johnson}). Since the set of quaternions is a Euclidean space, we can 
derive an analogous result of the second part of the Ger\v{s}gorin theorem, as done below. However, 
note that Theorem \ref{Thm- Gersgorin for right coneigenvalues} is an analogous result of the first part for 
coneigenvalues. 

\medspace
\begin{theorem}\label{Thm-Gersgorin connected set}
Let $A =(q_{ij}) \in M_n(\mathbb{H})$. If the union of $k$ Ger\v{s}gorin balls of $A$ forms 
a connected region and disjoint with the other Ger\v{s}gorin balls, then the connected region 
contains at least $k$ right coneigenvalues of $A$.
\end{theorem}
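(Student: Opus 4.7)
The approach mimics the standard continuous-deformation proof of the connected-region part of Gershgorin's theorem. Set $D = \mathrm{diag}(q_{11}, \ldots, q_{nn})$ and consider the affine homotopy $A(t) = D + t(A - D)$ for $t \in [0,1]$, so that $A(0) = D$ and $A(1) = A$. The Gershgorin balls $G_i(A(t)) = \{p \in \mathbb{H} : |p - q_{ii}| \leq tR_i(A)\}$ share the centers of those of $A$ but have radii scaled by $t$, hence $G_i(A(t)) \subseteq G_i(A)$. Writing $U$ for the union of the chosen $k$ balls of $A$ (connected and disjoint from the union $V$ of the remaining $n-k$), the corresponding unions $U(t) \subseteq U$ and $V(t) \subseteq V$ of balls of $A(t)$ remain disjoint at every $t$.

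The $n$ basal right coneigenvalues of $A(t)$ depend continuously on $t$: by Lemma \ref{lem-computing basal coneigenvalues} they are in explicit (affine) bijection with the standard eigenvalues of $jA(t)$, which by Proposition \ref{Prop-Properties-complex adjoint}(4) are exactly the eigenvalues of the $2n \times 2n$ complex adjoint $\bigchi_{jA(t)}$ lying in the closed upper half-plane. Since $t \mapsto \bigchi_{jA(t)}$ is affine, its $2n$ eigenvalues can be arranged into continuous branches; taking the $n$ branches with nonnegative imaginary part (using that eigenvalues come in conjugate pairs to swap at real-axis crossings) yields continuous curves $\lambda_1(t), \ldots, \lambda_n(t)$ of basal right coneigenvalues of $A(t)$.

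At $t = 0$, for each $i$ the scalar $q_{ii}$ is a right coneigenvalue of $D$ with coneigenvector $e_i$ (noting $\widetilde{e_i} = e_i$), so $q_{ii}$ lies in the consimilarity class $C_i(0) = \{\widetilde{s}^{-1}\lambda_i(0)s : s \in \mathbb{H}^*\}$. After relabelling so that $i = 1, \ldots, k$ index the balls contained in $U$, we have $q_i \in G_i(0) \cap C_i(0) \subseteq U \cap C_i(0)$. For every $t$, Theorem \ref{Thm- Gersgorin for right coneigenvalues} gives $C_i(t) \cap \bigcup_j G_j(A(t)) \neq \emptyset$, and this intersection is contained in $U \cup V$. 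Since $C_i(t)$ is the continuous image in $(s,t)$ of the compact sphere $\{s \in \mathbb{H} : |s| = 1\}$, the map $t \mapsto C_i(t)$ is Hausdorff continuous and each set $\{t \in [0,1] : C_i(t) \cap U \neq \emptyset\}$ is closed. A continuity/separation argument, using the disjointness of the closed sets $U$ and $V$ together with the connectedness of $[0,1]$, then shows that the count $\#\{i : C_i(t) \cap U \neq \emptyset\}$ cannot drop below its initial value $k$, yielding at $t = 1$ at least $k$ consimilarity classes of basal right coneigenvalues of $A$ with a representative in $U$.

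The main obstacle is this final count-preservation step. In the complex case one tracks a single continuously moving eigenvalue which cannot jump between disjoint closed sets, but here each basal coneigenvalue drags an entire consimilarity $2$-sphere $C_i(t)$ which could in principle intersect both $U$ and $V$ simultaneously. A cleaner alternative is to transfer the problem to the right eigenvalue setting via Proposition \ref{prop-1}(7): left-multiplication by $j$ is an isometry of $\mathbb{H}$ carrying $G_i(A)$ onto $G_i(jA)$, hence preserving connected components and the correspondence $\lambda \leftrightarrow j\lambda$ identifies right coneigenvalues of $A$ in $U$ with right eigenvalues of $jA$ in $jU$. One can then descend to the complex adjoint $\bigchi_{jA(t)}$ and apply the classical complex-matrix argument on its $2n$ continuous eigenvalue branches to read off the count landing in the image of $U$.
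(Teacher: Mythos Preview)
Your ``cleaner alternative'' in the last paragraph is exactly the reduction the paper uses: left-multiplication by $j$ is an isometry of $\mathbb{H}$ taking each ball $G_i$ of $A$ onto the Ger\v{s}gorin ball $\widehat{G}_i$ of $jA$, so connectedness and disjointness transfer, and by Proposition~\ref{prop-1}(7) right coneigenvalues of $A$ in $U$ correspond to right eigenvalues of $jA$ in $jU$. The paper then finishes in one line by invoking Theorem~8 of \cite{Zhang2} (the connected-region Ger\v{s}gorin theorem for right \emph{eigenvalues} of quaternion matrices), which supplies at least $k$ right eigenvalues of $jA$ in $\bigcup_{i\le k}\widehat{G}_i$; pulling back by $j^{-1}$ gives the conclusion.

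Where your write-up falls short is in the completion of either approach. In the first (homotopy on $A(t)$), the obstacle you flag is genuine and is not overcome by the argument you sketch: each consimilarity class $C_i(t)$ is a $2$-sphere of Euclidean radius comparable to $|\lambda_i(t)|$, and nothing in Theorem~\ref{Thm- Gersgorin for right coneigenvalues} prevents $C_i(t)$ from meeting both $U$ and $V$ simultaneously---this can already happen at $t=0$, so the set $\{t:C_i(t)\cap U\neq\emptyset\}$ being closed does not give the count-preservation you assert. In the second approach, ``descend to $\bigchi_{jA(t)}$ and apply the classical complex-matrix argument'' does not close the gap either: the Ger\v{s}gorin discs of the $2n\times 2n$ complex matrix $\bigchi_{jA}$ bear no direct relation to the quaternionic balls $\widehat{G}_i$, and the standard eigenvalues of $jA(t)$ (the upper-half-plane eigenvalues of $\bigchi_{jA(t)}$) do not sit at the centers $jq_{ii}$ when $t=0$ unless $jq_{ii}$ is already complex, so the usual ``eigenvalues start at the centers and cannot jump'' argument does not apply. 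What is actually needed at this point is precisely the quaternionic eigenvalue result of Zhang that the paper cites (or a reproduction of its proof); you have located the right reduction but not supplied the missing endgame.
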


\begin{proof}
Assume without loss of generality that the union of first $k$ Ger\v{s}gorin balls $G_1,\ldots,G_k$
is connected and is disjoint from the remaining $n-k$ Ger\v{s}gorin balls 
$G_{k+1},\ldots,G_n$. Let $\widehat{G}_i = \{z \in \mathbb{H} : |z - jq_{ii}|\leq R_i(jA)\}$
for $i = 1,2,\ldots,n$ be the Ger\v{s}gorin balls of $jA$. Since 
$\bigcup \limits_{i=1}^{k}G_i $ is disjoint from $\bigcup \limits_{i=k+1}^{n}G_i$, we have 
$\bigcup \limits_{i=1}^{k}\widehat{G}_i$ is disjoint from $\bigcup \limits_{i=k+1}^{n}
\widehat{G}_i$. Define $f : \bigcup \limits_{i=1}^{k}G_i \to \bigcup \limits_{i=1}^{k}
\widehat{G}_i$ by $f(z) = jz$. It is easy to verify that $f$ is well defined, continuous
and bijective. This implies $\bigcup \limits_{i=1}^{k}\widehat{G}_i$ is connected. Therefore,
by Theorem $8$ of \cite{Zhang2}, there are at least $k$ right eigenvalues of $jA$ in
$\bigcup \limits_{i=1}^{k}\widehat{G}_i$, say $\mu_1,\mu_2,\ldots,\mu_l$, where $l \geq k$. Let 
$\lambda_i = j^{-1} \mu_i$ for $i =1,2,\ldots,l$. Then $\lambda_1, \lambda_2,\ldots,\lambda_l$ 
are right coneigenvalues of $A$ and they belong to $\bigcup \limits_{i=1}^{k}G_i$. This 
completes the proof. 
\end{proof}

\medskip
\noindent
One can prove other localization theorems for coneigenvalues of quaternion matrices, which are 
analogous to Ostrowski and Brauer theorems (see Theorems $6.4.1$ and $6.4.7$, \cite{Horn-Johnson}). 
Since the proof techniques are similar to those from \cite{Horn-Johnson}, we skip them.

\subsection{Perturbation results for coneigenvalues of quaternion matrices} 
\hspace*{\fill}\label{sec-3.2}

In this section, we present some important perturbation results for right coneigenvalues of 
quaternion matrices. These are analogous to the results such as the Hoffman-Wielandt inequality 
\cite{Hoffman-Wielandt}, its generalization (\cite{Sun}, \cite{Sun-2}) and the Bauer-Fike theorem 
\cite{Bauer-Fike} for eigenvalues of complex matrices. In \cite{Pallavi-Hadimani-Jayaraman-3} 
(Theorems $3.1$ and $3.2$) and \cite{Ahmad-Ali-Ivan} (Theorem $3.7$), these results have been proved for 
eigenvalues of quaternion matrices.    

\medskip
\noindent
We now state and prove the analogous results for right coneigenvalues of quaternion matrices. 
We begin with the Hoffman-Wielandt type theorem. Note that analogous results do not hold in 
general if we consider normal matrices (see Remark \ref{Rem-Counter example for normal matrix}). 
The counterpart of normal matrices in the study of consimilarity is conjugate normal matrices.

\medspace
\begin{theorem}[Hoffman-Wielandt type theorem for coneigenvalues]\label{Thm-H-W coneigenvalues}
Let $A, B \in M_n(\mathbb{H})$ be conjugate normal. Let the basal right coneigenvalues of $A$ 
and $B$ be $\lambda_1, \lambda_2,\dots,\lambda_n$ 
and $\mu_1,\mu_2,\dots,\mu_n$ respectively. 
Then there is a permutation $\pi$ on $\{1,2,\dots,n\}$ such that
\begin{equation}\label{Eqn-HW-coneigenvalues}
		\displaystyle \sum_{l=1}^{n} |\lambda_l - \mu_{\pi(l)}|^2 \leq ||A-B||^2_F.
\end{equation}
\end{theorem}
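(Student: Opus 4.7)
The plan is to reduce the statement to the already-known Hoffman-Wielandt inequality for normal quaternion matrices (cited from \cite{Pallavi-Hadimani-Jayaraman-3} and \cite{Ahmad-Ali-Ivan}) by passing from $A$ to $jA$ via Lemma \ref{lem-computing basal coneigenvalues} and Proposition \ref{prop-1} $(7)$.

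First I would show the key bridging fact: \emph{$A \in M_n(\mathbb{H})$ is conjugate normal if and only if $jA$ is normal.} This is a direct verification: using $j^2 = -1$ and $(jA)^{\ast} = -A^{\ast}j$, one computes $(jA)^{\ast}(jA) = A^{\ast}A$ while $(jA)(jA)^{\ast} = -jAA^{\ast}j = \widetilde{AA^{\ast}}$; the normality condition for $jA$ therefore reads $A^{\ast}A = \widetilde{AA^{\ast}}$, which is precisely the definition of conjugate normality. Hence under the hypothesis of the theorem, both $jA$ and $jB$ are normal quaternion matrices.

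Next I would invoke the Hoffman-Wielandt inequality for normal quaternion matrices on the pair $(jA, jB)$. Writing the basal right coneigenvalues of $A$ and $B$ as $\lambda_l = a_l + b_l j$ and $\mu_l = a'_l + b'_l j$ with $a_l, a'_l \geq 0$, Lemma \ref{lem-computing basal coneigenvalues} identifies the standard eigenvalues of $jA$ and $jB$ as $\widehat{\lambda}_l = -b_l + a_l i$ and $\widehat{\mu}_l = -b'_l + a'_l i$ respectively. The Hoffman-Wielandt inequality for normal quaternion matrices then provides a permutation $\pi$ of $\{1,\ldots,n\}$ with
\begin{equation*}
\sum_{l=1}^{n} |\widehat{\lambda}_l - \widehat{\mu}_{\pi(l)}|^2 \leq \|jA - jB\|_F^2.
\end{equation*}

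Finally I would convert this bound back to the basal right coneigenvalues of $A$ and $B$ using two isometry identities. On the eigenvalue side, a short computation gives
\begin{equation*}
|\widehat{\lambda}_l - \widehat{\mu}_{\pi(l)}|^2 = (a_l - a'_{\pi(l)})^2 + (b_l - b'_{\pi(l)})^2 = |\lambda_l - \mu_{\pi(l)}|^2,
\end{equation*}
so the map $a + bj \mapsto -b + ai$ is a real isometry that matches the summands term-by-term. On the matrix side, Proposition \ref{Prop-Properties-complex adjoint} $(6)$ yields $\|jA - jB\|_F = \|j(A-B)\|_F = \|A - B\|_F$. Combining these two equalities with the Hoffman-Wielandt bound for $(jA, jB)$ gives \eqref{Eqn-HW-coneigenvalues} with the same permutation $\pi$.

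The only delicate step is the first one, namely, recognizing that conjugate normality of $A$ is exactly what is needed to guarantee normality of $jA$; once this observation is in place, the rest is a direct transport of the classical Hoffman-Wielandt inequality through the isometries $\lambda \mapsto j\lambda$ (at the spectrum level) and $A \mapsto jA$ (at the matrix level). I do not foresee a genuine obstacle beyond recording these compatibilities carefully.
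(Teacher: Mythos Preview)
Your proposal is correct and follows essentially the same route as the paper: reduce to the Hoffman--Wielandt inequality for the normal quaternion matrices $jA$ and $jB$ via Lemma~\ref{lem-computing basal coneigenvalues}, then use $\|j(A-B)\|_F=\|A-B\|_F$ and the fact that $\lambda\mapsto\widehat{\lambda}$ is an isometry. The only cosmetic difference is that you verify the isometry $|\widehat{\lambda}_l-\widehat{\mu}_{\pi(l)}|=|\lambda_l-\mu_{\pi(l)}|$ by a direct real-coordinate computation, whereas the paper obtains it from $\widehat{\lambda}_l=(i+j)j\lambda_l(i+j)^{-1}$ and multiplicativity of the modulus; you also supply the short check that conjugate normality of $A$ is equivalent to normality of $jA$, which the paper asserts without proof.
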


\begin{proof}
Let $\lambda_l = a_l + b_l j$ and $\mu_l = c_l+d_l j$, where $a_l$, $b_l$, $c_l$, $d_l \in 
\mathbb{R}$ and $a_l$, $c_l$ are nonnegative. Then by Lemma \ref{lem-computing basal coneigenvalues}, 
$\widehat{\lambda}_l = -b_l+a_li$ and $\widehat{\mu}_l= -d_l+c_li$ for $l=1,2,\ldots,n$ are the 
standard eigenvalues of $jA$ and $jB$, respectively. Since $A$ and $B$ are conjugate normal, 
$jA$ and $jB$ are normal matrices. Therefore, by Theorem $3.1$ of \cite{Pallavi-Hadimani-Jayaraman-3} 
we have  $\displaystyle \sum_{l=1}^{n} |\widehat{\lambda}_l - \widehat{\mu}_{\pi(l)}|^2 \leq ||jA-jB||^2_F$ 
for some permutation $\pi$ on the set $\{1,2,\dots,n\}$. Since $|| jA-jB||_F = ||A-B||_F$, 
$\widehat{\lambda}_l=(i+j)j\lambda_l(i+j)^{-1}$ and $\widehat{\mu}_l=(i+j)j\mu_l(i+j)^{-1}$ for 
$l= 1,2,\dots,n$, we have 
\begin{equation*}
		\displaystyle \sum_{l=1}^{n} |(i+j)(j\lambda_l - j\mu_{\pi(l)})(i+j)^{-1}|^2 \leq 
		||A-B||^2_F.
\end{equation*}
This implies
\begin{equation*}
		\displaystyle \sum_{l=1}^{n} |i+j|^2|j|^2|\lambda_l - \mu_{\pi(l)}|^2|(i+j)^{-1}|^2 \leq 
		||A-B||^2_F.
\end{equation*}
Since $|j|=1$ and $|i+j||(i+j)^{-1}|=1$ we get
\begin{equation*}
		\displaystyle \sum_{l=1}^{n} |\lambda_l - 
		\mu_{\pi(l)}|^2 \leq ||A-B||^2_F.
\end{equation*}
This completes the proof.
\end{proof}

\medspace
\begin{remark}\label{Rem-Counter example for normal matrix}
If we change the condition on the matrices from conjugate normal to normal in the above theorem, 
Inequality \eqref{Eqn-HW-coneigenvalues} need not hold in general. For example, consider 
$A = \begin{bmatrix}
		j & i \\
		i & -j
\end{bmatrix}$ and $B = \begin{bmatrix}
		j & i \\
		i & \frac{-j}{4}
\end{bmatrix}$. Then $A$ and $B$ are normal quaternion matrices with basal right coneigenvalues 
$\lambda_1 =\lambda_2= 0$ and $\mu_1 = \mu_2 =\displaystyle \frac{\sqrt{39}+3j}{8}$ 
respectively. We can observe that $||A-B||_F^2 = 0.5629$, where as $ \displaystyle \sum_{l=1}^{2} 
|\lambda_l - \mu_{\pi(l)}|^2 = 1.5$ for any permutation $\pi $ on $\{1,2\}$.
\end{remark}

\medskip
\noindent
As a parallel to the diagonalization of matrices, we consider the condiagonalization of matrices 
in the study of the consimilarity of quaternion matrices. In the analogous results to the 
generalized Hoffman-Wielandt inequality and the Bauer-Fike theorem,  we consider 
condiagonalizability of quaternion matrices in place of diagonalizability. We state and prove these 
results below.

\medspace
\begin{theorem}[Generalized Hoffman-Wielandt type theorem for coneigenvalues]\label{Thm-H-W type coneigenvalues}
Let $A \in M_n(\mathbb{H})$ be condiagonalizable and $B \in M_n(\mathbb{H})$ be conjugate normal. Let 
the basal right coneigenvalues of $A$ and $B$ be 
$\lambda_1,\lambda_2,\dots,\lambda_n$ and $\mu_1,\mu_2, \dots,\mu_n$ respectively. Then there is a 
permutation $\pi$ on $\{1,2,\dots,n\}$ such that
\begin{equation}\label{Eqn-H-W type coneigenvalues}
		\displaystyle \sum_{l=1}^{n}|\lambda_l-\mu_{\pi(l)}|^2\leq ||P||^2_2||P^{-1}||^2_2
		||A-B||^2_F,
\end{equation}
where $P \in M_n(\mathbb{H})$ is a matrix which condiagonalizes $A$.
\end{theorem}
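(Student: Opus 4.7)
The plan is to reduce the statement to its counterpart for right eigenvalues of quaternion matrices, namely Theorem $3.2$ of \cite{Pallavi-Hadimani-Jayaraman-3}, by applying that result to the pair $(jA, jB)$ and then translating back to right coneigenvalues of $A$ and $B$ via Lemma \ref{lem-computing basal coneigenvalues}. This mirrors the scheme used in the proof of Theorem \ref{Thm-H-W coneigenvalues}, with condiagonalizability replacing conjugate normality on the first matrix.

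The first step is to transfer the hypotheses from $A, B$ to $jA, jB$. Since $B$ is conjugate normal, $jB$ is normal. For the condiagonalization, write $\widetilde{P}^{-1} A P = D$ with $D$ diagonal, so $A = \widetilde{P} D P^{-1}$. The key algebraic fact is the entrywise identity $j\widetilde{q} = qj$ for every $q \in \mathbb{H}$, an immediate consequence of $\widetilde{q} = -jqj$ and $j^2 = -1$. Applied entrywise this gives $j\widetilde{P} = Pj$, and hence
\begin{equation*}
jA \;=\; j\widetilde{P}\cdot D P^{-1} \;=\; P\,(jD)\,P^{-1},
\end{equation*}
so that $P^{-1}(jA) P = jD$ is diagonal. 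Thus the very same $P$ ordinarily diagonalizes $jA$. If one prefers the standard eigenvalues of $jA$ on the diagonal, one may further conjugate by the unitary scalar matrix $\tfrac{i+j}{\sqrt{2}} I$; this preserves the spectral-norm condition number $||P||_2\, ||P^{-1}||_2$.

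Next, an application of the generalized Hoffman-Wielandt inequality for quaternion matrices (Theorem $3.2$ of \cite{Pallavi-Hadimani-Jayaraman-3}) to the diagonalizable matrix $jA$ and the normal matrix $jB$ provides a permutation $\pi$ on $\{1,\ldots,n\}$ satisfying
\begin{equation*}
\sum_{l=1}^{n} |\widehat{\lambda}_l - \widehat{\mu}_{\pi(l)}|^2 \;\leq\; ||P||_2^2\, ||P^{-1}||_2^2\, ||jA - jB||_F^2,
\end{equation*}
where, by Lemma \ref{lem-computing basal coneigenvalues}, $\widehat{\lambda}_l = (i+j)j\lambda_l(i+j)^{-1}$ and $\widehat{\mu}_l = (i+j)j\mu_l(i+j)^{-1}$ are the standard eigenvalues of $jA$ and $jB$. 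To conclude, multiplicativity of the modulus together with $|i+j|\,|j|\,|(i+j)^{-1}| = 1$ gives $|\widehat{\lambda}_l - \widehat{\mu}_{\pi(l)}| = |\lambda_l - \mu_{\pi(l)}|$, and Proposition \ref{Prop-Properties-complex adjoint}(6) gives $||jA-jB||_F = ||A-B||_F$. Substituting these two simplifications yields exactly \eqref{Eqn-H-W type coneigenvalues}.

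The main obstacle, such as it is, is the bookkeeping step that the condiagonalizing $P$ can be reused as an ordinary diagonalizer of $jA$ with the same spectral-norm condition number; the identity $j\widetilde{P} = Pj$ is precisely what makes this identification work and is what allows the constant $||P||_2^2\,||P^{-1}||_2^2$ on the right-hand side to be attributed to the matrix $P$ appearing in the statement.
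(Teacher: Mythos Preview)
Your proof is correct and follows essentially the same route as the paper's: both reduce to Theorem~3.2 of \cite{Pallavi-Hadimani-Jayaraman-3} applied to $(jA,jB)$, observe that $B$ conjugate normal makes $jB$ normal, derive $P^{-1}(jA)P=jD$ from the condiagonalization (you via the identity $j\widetilde{P}=Pj$, the paper via $\widetilde{P}^{-1}=-jP^{-1}j$ and a left-multiplication by $j$, which are the same computation), then conjugate by $(i+j)$ to place the standard eigenvalues on the diagonal while leaving the condition number $\|P\|_2\|P^{-1}\|_2$ unchanged, and finally use $|\widehat{\lambda}_l-\widehat{\mu}_{\pi(l)}|=|\lambda_l-\mu_{\pi(l)}|$ together with $\|jA-jB\|_F=\|A-B\|_F$.
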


\begin{proof}
Note that by Lemma \ref{lem-computing basal coneigenvalues}, $\widehat{\lambda}_l = 
(i+j)j\lambda_l(i+j)^{-1}$ and $\widehat{\mu}_l= (i+j)j\mu_i(i+j)^{-1}$ for $l=1,2,\ldots,n$ 
are the standard eigenvalues of $jA$ and $jB$ respectively. Since $A$ is condiagonalizable, 
by Proposition \ref{prop-1} $(6)$, we have, $\widetilde{P}^{-1}AP = D_c$ for some invertible matrix 
$P \in M_n(\mathbb{H})$, where $D_c:=\text{diag}(\lambda_1,\lambda_2,\dots,\lambda_n)$. 
By the definition of $j$-conjugate of a matrix, we have $-jP^{-1}jAP = D_c$. Pre-multiplying by $j$ on 
both sides of the equation yields $P^{-1}jAP = jD_c$. On pre and post-multiplying by $(i+j)$ and 
its inverse respectively on both the sides, we get $(i+j)P^{-1}jAP(i+j)^{-1} = D$, where 
$D:= \text{diag}(\widehat{\lambda}_1, \widehat{\lambda}_2,\dots,\widehat{\lambda}_n)$. This implies 
$jA$ is diagonalizable through $P(i+j)^{-1}$. Since $B$ is conjugate normal, $jB$ is normal. 
Therefore, by Theorem $3.2$ of \cite{Pallavi-Hadimani-Jayaraman-3}, 
there is a permutation $\pi$ on $\{1,2,\dots,n\}$ such that 
$\displaystyle \sum_{l=1}^{n}|\widehat{\lambda}_l-\widehat{\mu}_{\pi(l)}|^2\leq 
||P(i+j)^{-1}||^2_2||(i+j)P^{-1}||^2_2||jA-jB||^2_F$. 
Since $||P(i+j)^{-1}||_2||(i+j)P^{-1}||_2= ||P||_2||P^{-1}||_2$, we get
\begin{equation*}
		\displaystyle \sum_{l=1}^{n}|\lambda_l-\mu_{\pi(l)}|^2 = \displaystyle 
		\sum_{l=1}^{n}|\widehat{\lambda}_l-
		\widehat{\mu}_{\pi(l)}|^2 \leq 
		||P||^2_2||P^{-1}||^2_2||A-B||^2_F.
\end{equation*}
This completes the proof.
\end{proof}

\medskip
\noindent
We illustrate Theorem \ref{Thm-H-W type coneigenvalues} with the following example.

\begin{example}\label{Ex-Illustration examples}
Let $A = \begin{bmatrix}
		-j & k \\
		k & -j
\end{bmatrix}$ and $B = \begin{bmatrix}
		4k & j+k \\
		-j-k & 4k
\end{bmatrix}$. We have, $jA = \begin{bmatrix}
		1 & i \\
		i & 1
\end{bmatrix}$ and the standard eigenvalues of $jA$ are $\widehat{\lambda}_1 = 1+i = \widehat{\lambda}_2$. 
Therefore, the basal right coneigenvalues of $A$ are $\lambda_1 = 1-j = \lambda_2$. Note that $jA$ is diagonalizable 
and $\widehat{P} = \begin{bmatrix}
		\frac{-j}{2} & \frac{1}{2} \\
		\frac{j}{2} & \frac{1}{2}
\end{bmatrix}$ diagonalizes $jA$. Hence, $A$ is condiagonalizable and $P = \widehat{P}(i+j) = \begin{bmatrix}
		\frac{1+k}{2} & \frac{i+j}{2} \\
		\frac{-1-k}{2} & \frac{i+j}{2}
\end{bmatrix}$ condiagonalizes $A$. One can verify that $||P||_2 = 1 = ||P^{-1}||_2$. We have 
$jB = \begin{bmatrix}
		4i & -1+i \\
		1-i & 4i
\end{bmatrix}$, which is a normal matrix. Therefore, $B$ is conjugate normal. The standard eigenvalues of $jB$ 
are $\widehat{\mu}_1 = 1+5i, \, \widehat{\mu}_2 = -1+3i$. Hence, the basal right coneigenvalues of $B$ are, 
$\mu_1 = 5-j, \, \mu_2= 3+j$. By taking $\pi$ to be the identity permutation on $\{1,2\}$, we get 
$\displaystyle \sum_{l=1}^{2}|\lambda_l-\mu_{\pi(l)}|^2 = 24$ and $ ||P||^2_2||P^{-1}||^2_2 ||A-B||^2_F = 40$. Therefore, the Inequality \eqref{Eqn-H-W type coneigenvalues} holds good for the identity permutation $\pi$. 
\end{example}

\medspace
\begin{theorem}[Bauer-Fike type theorem]\label{Them-B-F type coneigenvales}
Let $A \in M_n(\mathbb{H})$ be condiagonalizable and $E \in M_n(\mathbb{H})$ be arbitrary. If 
$\mu$ is any basal right coneigenvalue of $A+E$, then there exists a basal right coneigenvalue 
$\lambda$ of $A$ such that 
\begin{equation}\label{Eqn-B-F type coneigenvalues}
		|\mu-\lambda| \leq ||P||_2||P^{-1}||_2 ||E||_2,
\end{equation}
where $P \in M_n(\mathbb{H})$ is a matrix which condiagonalizes $A$.
\end{theorem}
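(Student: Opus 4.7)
The plan is to follow the same reduction as in the proofs of Theorems \ref{Thm-H-W coneigenvalues} and \ref{Thm-H-W type coneigenvalues}: translate the statement about basal right coneigenvalues of $A$ and $A+E$ into one about standard right eigenvalues of the quaternion matrices $jA$ and $j(A+E) = jA+jE$, and then invoke the Bauer--Fike theorem for quaternion matrices (Theorem $3.7$, \cite{Ahmad-Ali-Ivan}). By Lemma \ref{lem-computing basal coneigenvalues}, if $\lambda_1,\dots,\lambda_n$ are the basal right coneigenvalues of $A$ and $\mu$ is a basal right coneigenvalue of $A+E$, then $\widehat{\lambda}_l := (i+j)j\lambda_l(i+j)^{-1}$ are the standard eigenvalues of $jA$ and $\widehat{\mu} := (i+j)j\mu(i+j)^{-1}$ is a standard eigenvalue of $jA+jE$.

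Exactly as in the proof of Theorem \ref{Thm-H-W type coneigenvalues}, the condiagonalization $\widetilde{P}^{-1}AP = \mathrm{diag}(\lambda_1,\dots,\lambda_n)$ yields $(i+j)P^{-1}(jA)P(i+j)^{-1} = \mathrm{diag}(\widehat{\lambda}_1,\dots,\widehat{\lambda}_n)$, so $jA$ is diagonalized by $Q := P(i+j)^{-1}$. Applying Theorem $3.7$ of \cite{Ahmad-Ali-Ivan} to $jA$ with perturbation $jE$ produces a standard eigenvalue $\widehat{\lambda}$ of $jA$ satisfying
\[
|\widehat{\mu} - \widehat{\lambda}| \leq \|Q\|_2 \|Q^{-1}\|_2 \|jE\|_2.
\]
Let $\lambda$ be the basal right coneigenvalue of $A$ corresponding to $\widehat{\lambda}$ under the bijection of Lemma \ref{lem-computing basal coneigenvalues}.

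It remains to rewrite each factor in quaternion-theoretic terms. By Proposition \ref{Prop-Properties-complex adjoint}$(6)$, $\|jE\|_2 = \|E\|_2$. Using multiplicativity of the quaternionic modulus together with $|j|=1$ and $|i+j|=\sqrt{2}$, one has $|\widehat{\mu} - \widehat{\lambda}| = |(i+j)j(\mu-\lambda)(i+j)^{-1}| = |\mu-\lambda|$. Finally, right-multiplication of a matrix by a scalar $\alpha I$ scales the spectral norm by $|\alpha|$, so the two factors of $\sqrt{2}$ cancel and $\|Q\|_2\|Q^{-1}\|_2 = \|P(i+j)^{-1}\|_2 \|(i+j)P^{-1}\|_2 = \|P\|_2\|P^{-1}\|_2$. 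Combining these identities yields the required bound $|\mu-\lambda| \leq \|P\|_2\|P^{-1}\|_2\|E\|_2$.

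The only slightly delicate point is the non-commutative bookkeeping with the scalars $(i+j)^{\pm 1}$: one must insert them on the right of $P$ and verify that the substitution $y = (i+j)^{-1}x$ in the supremum defining the spectral norm converts $\|P(i+j)^{-1}\|_2$ into $\|P\|_2/|i+j|$ (and symmetrically on the other side). This rests on the elementary identity $\|(i+j)y\|_2^2 = \sum_k|(i+j)y_k|^2 = |i+j|^2\|y\|_2^2$. Given the template already established in Theorem \ref{Thm-H-W type coneigenvalues}, no genuinely new obstacle appears beyond this scalar accounting.
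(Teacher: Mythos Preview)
Your proof is correct and follows essentially the same route as the paper's own argument: reduce to the quaternionic Bauer--Fike theorem (Theorem~$3.7$ of \cite{Ahmad-Ali-Ivan}) applied to $jA$ and $jE$, using Lemma~\ref{lem-computing basal coneigenvalues} for the correspondence between basal right coneigenvalues and standard eigenvalues and the diagonalizing matrix $Q = P(i+j)^{-1}$ from the proof of Theorem~\ref{Thm-H-W type coneigenvalues}. Your added justification for the identity $\|P(i+j)^{-1}\|_2\|(i+j)P^{-1}\|_2 = \|P\|_2\|P^{-1}\|_2$ via the substitution $y=(i+j)^{-1}x$ is a welcome elaboration of a step the paper simply asserts.
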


\begin{proof}
Since $A$ is condiagonalizable through $P$, as in the proof of Theorem \ref{Thm-H-W type coneigenvalues} 
$jA$ is diagonalizable through $P(i+j)^{-1}$. Since $\mu$ is a basal right coneigenvalue of $A+E$, 
by Lemma \ref{lem-computing basal coneigenvalues}, $\widehat{\mu}=(i+j)j\mu(i+j)^{-1}$ is a standard 
eigenvalue of $jA + jE$. Therefore, by Theorem $3.7$ of \cite{Ahmad-Ali-Ivan}, there is a standard 
eigenvalue, say $\widehat{\lambda}$ of $jA$, such that
$|\widehat{\mu}-\widehat{\lambda}| \leq ||P(i+j)^{-1}||_2||(i+j)P^{-1}||_2 ||jE||_2$. Once again, by 
Lemma \ref{lem-computing basal coneigenvalues}, $\widehat{\lambda} = (i+j)j\lambda(i+j)^{-1}$ for some basal 
right coneigenvalue $\lambda$ of $A$. Since $||jE||_2= ||E||_2$ and 
$||P(i+j)^{-1}||_2||(i+j)P^{-1}||_2 =||P||_2||P^{-1}||_2 $, we have 
$|\mu-\lambda| = |\widehat{\mu}- \widehat{\lambda}| \leq ||P||_2||P^{-1}||_2 ||E||_2$.
\end{proof}

\medskip
\noindent
Note that in the Bauer-Fike theorem for eigenvalues of complex matrices, the norm in the 
inequality is any matrix norm induced by an absolute norm on $\mathbb{C}^n$. In 
particular, if we consider the maximum row sum norm, one can derive a weak form the 
Ger\v{s}gorin theorem for complex matrices. Therefore, even though Ger\v{s}gorin theorems 
are strictly not perturbation results, they can be viewed as simple perturbation theorems
in the case of complex matrices. However, in the case of quaternion matrices, Inequality 
\eqref{Eqn-B-F type coneigenvalues} may not be true for other matrix norms. Therefore\textcolor{red}{,}
we cannot derive the Ger\v{s}gorin type theorem from the Bauer-Fike type theorem, and hence,
we cannot view the Ger\v{s}gorin type theorem as a perturbation result in case of quaternion
matrices.

\medskip
\noindent
We end this section with the following result, whose proof is similar to that of Theorem 
\ref{Thm-H-W coneigenvalues}.

\medspace
\begin{theorem}
Let $A , B \in M_n(\mathbb{H})$ be skew symmetric matrices. Let the basal right coneigenvalues 
of $A$ and $B$ be $\lambda_1, \lambda_2,\dots,\lambda_n$ and $\mu_1,\mu_2,\dots, \mu_n$ 
respectively. Then there is a permutation $\pi$ on $\{1,2,\dots,n\}$ such that
\begin{equation}
		\displaystyle \sum_{l=1}^{n} |\lambda_l - \mu_{\pi(l)}|^2 \leq ||A-B||^2_F.
\end{equation}
\end{theorem}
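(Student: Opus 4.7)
The plan is to follow the template of Theorem~\ref{Thm-H-W coneigenvalues} nearly verbatim, substituting ``skew-symmetric'' for ``conjugate normal''. First I would invoke Lemma~\ref{lem-computing basal coneigenvalues}: writing $\lambda_l = a_l + b_l j$ and $\mu_l = c_l + d_l j$ with $a_l, c_l \geq 0$, the complex numbers $\widehat{\lambda}_l := -b_l + a_l i = (i+j)\,j\,\lambda_l\,(i+j)^{-1}$ and $\widehat{\mu}_l := -d_l + c_l i = (i+j)\,j\,\mu_l\,(i+j)^{-1}$ are the standard right eigenvalues of $jA$ and $jB$, respectively.

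Next I would observe that $jA$ and $jB$ inherit skew-symmetry from $A$ and $B$. Since $j$ is a scalar, $(jM)^T = j\,M^T$ for every $M \in M_n(\mathbb{H})$; hence $A^T = -A$ forces $(jA)^T = -jA$, and similarly for $B$. This is the structural substitute, in the skew-symmetric setting, for the observation ``conjugate normal $A$ $\Rightarrow$ normal $jA$'' that drove the proof of Theorem~\ref{Thm-H-W coneigenvalues}. Appealing to a Hoffman--Wielandt type inequality for the standard eigenvalues of skew-symmetric quaternion matrices would then supply a permutation $\pi$ on $\{1,\dots,n\}$ with
\[
\sum_{l=1}^n \bigl|\widehat{\lambda}_l - \widehat{\mu}_{\pi(l)}\bigr|^2 \;\leq\; \|jA - jB\|_F^2.
\]
The final step would convert back: using $|\widehat{\lambda}_l - \widehat{\mu}_{\pi(l)}| = |\lambda_l - \mu_{\pi(l)}|$ (from $|i+j|\,|(i+j)^{-1}|=1$ and $|j|=1$) together with $\|jA - jB\|_F = \|A-B\|_F$ from Proposition~\ref{Prop-Properties-complex adjoint}(6), I would deduce the claimed $\sum_{l=1}^n |\lambda_l - \mu_{\pi(l)}|^2 \leq \|A - B\|_F^2$.

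The hard part will be the middle Hoffman--Wielandt step. Theorem~3.1 of the Pallavi--Hadimani--Jayaraman reference invoked in the proof of Theorem~\ref{Thm-H-W coneigenvalues} is stated for normal quaternion matrices, and a quaternion skew-symmetric matrix need not be normal. I would therefore need either to verify directly that $jA$ and $jB$, though only skew-symmetric rather than normal, still admit a Hoffman--Wielandt bound on their standard eigenvalues, or to appeal to a suitable extension of that inequality to the skew-symmetric setting. A natural route is to exploit the $\pm$-pairing of standard eigenvalues of skew-symmetric quaternion matrices (which follows from the skew-symmetric structure of their complex adjoint representation) together with the Hoffman--Wielandt inequality for singular values, which holds without any normality hypothesis.
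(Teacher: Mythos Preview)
Your overall architecture is exactly the paper's: pass from the basal right coneigenvalues of $A,B$ to the standard eigenvalues of $jA,jB$ via Lemma~\ref{lem-computing basal coneigenvalues}, invoke a Hoffman--Wielandt inequality for those standard eigenvalues, and convert back using $|\widehat{\lambda}_l-\widehat{\mu}_{\pi(l)}|=|\lambda_l-\mu_{\pi(l)}|$ and $\|jA-jB\|_F=\|A-B\|_F$. The only point of divergence is the middle step, and here you have been \emph{more} careful than the paper.

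The paper's entire proof is the sentence ``Since $A$ and $B$ are skew-symmetric matrices, $jA$ and $jB$ are normal matrices,'' after which it defers to Theorem~\ref{Thm-H-W coneigenvalues}. That assertion is not correct. Recall that $jA$ is normal if and only if $A$ is conjugate normal (since $(jA)^*(jA)=A^*A$ while $(jA)(jA)^*=\widetilde{AA^*}$), and skew-symmetry does not force conjugate normality. Concretely, take
\[
A=\begin{bmatrix} 0 & j & i \\ -j & 0 & 1 \\ -i & -1 & 0 \end{bmatrix},\qquad
jA=\begin{bmatrix} 0 & -1 & -k \\ 1 & 0 & j \\ k & -j & 0 \end{bmatrix}.
\]
Then $A^T=-A$, but the $(1,3)$ entry of $(jA)^*(jA)$ is $j$ while that of $(jA)(jA)^*$ is $-j$, so $jA$ is not normal. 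Hence the paper's reduction to Theorem~3.1 of \cite{Pallavi-Hadimani-Jayaraman-3} is not justified as stated.

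You correctly refuse to claim normality, observe instead that $jA$ and $jB$ are again skew-symmetric, and flag the need for a Hoffman--Wielandt bound tailored to that class. That is precisely the gap in the paper's argument, and your proposal is honest about not yet closing it. Your suggested routes---exploiting the $\pm$-pairing of eigenvalues in the complex adjoint of a skew-symmetric quaternion matrix, or passing through the singular-value Hoffman--Wielandt inequality---are reasonable avenues, but either would require a genuine argument (in particular, linking the $\pm$-paired eigenvalues of $\bigchi_{jA}$ back to the \emph{standard} eigenvalues of $jA$ with the right multiplicities and the right permutation). As it stands, neither the paper's proof nor your proposal furnishes a complete justification of the middle inequality; you have simply located the difficulty more accurately.
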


\begin{proof}
Since $A$ and $B$ are skew-symmetric matrices, $jA$ and $jB$ are normal matrices. Therefore, the proof is the 
same as that of Theorem \ref{Thm-H-W coneigenvalues}.
\end{proof}

\subsection{Spectral variation and Hausdorff distance for right coneigenvalues of quaternion matrices} 
\hfill \label{sec-3.3}

Recall the following definitions for complex matrices (see \cite{Stewart-Sun} for details). Let 
$A,B \in M_n(\mathbb{C})$ and let $\alpha_1,\alpha_2,\ldots,\alpha_n$ and 
$\beta_1 ,\beta_2,\ldots,\beta_n$ be the eigenvalues of $A$ and $B$ respectively. 
\begin{enumerate}
\item The spectral variation of 
$B$ with respect to $A$ denoted by $sv_A(B)$, is defined as $sv_A(B) := \displaystyle \max_{1 \leq j \leq n} 
\min_{1 \leq i \leq n} |\beta_j-\alpha_i|$.
\item The Hausdorff distance between the eigenvalues of $A$ and $B$, 
denoted by $hd(A,B)$, is defined as $hd(A,B) := \max \{sv_A(B), sv_B(A)\}$.
\end{enumerate}

\medskip
\noindent
One of the well known bounds for spectral variation of complex matrices is the following inequality 
due to Elsner. Elsner's result (Theorem $1$, \cite{Elsner}) says that
\begin{equation}\label{Eqn-Elsner inequality}
	sv_A(B) \leq ||A-B||_2^{1/n} (||A||_2+||B||_2)^{1-1/n}.
\end{equation}

\medskip
\noindent
Observe that the right hand side of Inequality \eqref{Eqn-Elsner inequality} is symmetric in $A$ 
and $B$. This observation leads to the following bound on the Hausdorff distance between the eigenvalues 
of complex matrices
\begin{equation}\label{Eqn-Hausdorff distance inequality}
	hd(A,B) \leq ||A-B||_2^{1/n} (||A||_2+||B||_2)^{1-1/n}.
\end{equation}

\medskip
\noindent
We now define analogous definitions for right (con)eigenvalues of quaternion matrices and find the 
bounds analogous to \eqref{Eqn-Elsner inequality} and \eqref{Eqn-Hausdorff distance inequality}.

\medspace
\begin{definition}
Let $A, B \in M_n(\mathbb{H})$. Let $\lambda_1,\lambda_2,\ldots, \lambda_n$ and 
$\mu_1,\mu_2,\ldots,\mu_n$ be the standard eigenvalues of $A$ and $B$ respectively. Define the spectral 
variation of $B$ with respect to $A$, denoted by $sv_A(B)$ as 
\begin{center}
		$sv_A(B) := \displaystyle \max_{1 \leq j\leq n} \min_{1 \leq i\leq n} |\mu_j-\lambda_i|$.
\end{center}
	
\medskip
\noindent
Define the Hausdorff distance between the standard eigenvalues of $A$ and $B$, denoted by $hd(A,B)$, as
\begin{center}
		$hd(A,B) := \max \{sv_A(B), sv_B(A)\}$.
\end{center}
\end{definition}

\medskip
\noindent
Since the standard eigenvalues of a quaternion matrix are complex numbers, we use the same notations 
to denote the spectral variation and Hausdorff distance between the standard eigenvalues of quaternion 
matrices. Our first theorem in this section is the following.

\medspace
\begin{theorem}\label{Thm-sv right eigenvalues}
Let $A,B \in M_n(\mathbb{H})$. Then
\begin{equation}\label{Eqn-sv right eigenvalues}
		sv_A(B) \leq ||A-B||_2^{1/2n} (||A||_2+||B||_2)^{1-1/2n}.
\end{equation}
\end{theorem}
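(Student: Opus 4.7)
The plan is to lift the problem from quaternion matrices to complex matrices via the complex adjoint $\bigchi$ and then invoke the classical Elsner inequality for $2n \times 2n$ complex matrices.

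First, I would form $\bigchi_A, \bigchi_B \in M_{2n}(\mathbb{C})$. By Proposition \ref{Prop-Properties-complex adjoint} $(5)$ we have $\|\bigchi_A\|_2 = \|A\|_2$, $\|\bigchi_B\|_2 = \|B\|_2$, and since $\bigchi$ is $\mathbb{R}$-linear, $\bigchi_A - \bigchi_B = \bigchi_{A-B}$ so $\|\bigchi_A - \bigchi_B\|_2 = \|A - B\|_2$. By Proposition \ref{Prop-Properties-complex adjoint} $(4)$, the spectrum of $\bigchi_A$ is $\{\lambda_1,\ldots,\lambda_n,\overline{\lambda_1},\ldots,\overline{\lambda_n}\}$ (and analogously for $\bigchi_B$). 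Elsner's inequality \eqref{Eqn-Elsner inequality} applied to the $2n \times 2n$ complex matrices $\bigchi_A$ and $\bigchi_B$ yields
\begin{equation*}
sv_{\bigchi_A}(\bigchi_B) \;\leq\; \|\bigchi_A - \bigchi_B\|_2^{1/2n}\bigl(\|\bigchi_A\|_2+\|\bigchi_B\|_2\bigr)^{1-1/2n} \;=\; \|A-B\|_2^{1/2n}\bigl(\|A\|_2+\|B\|_2\bigr)^{1-1/2n}.
\end{equation*}

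Next I would show $sv_A(B) \leq sv_{\bigchi_A}(\bigchi_B)$. Fix any standard eigenvalue $\mu_j = c_j + d_j i$ of $B$ with $d_j \geq 0$. Regarded as an eigenvalue of $\bigchi_B$, by definition of $sv_{\bigchi_A}(\bigchi_B)$ there exists an eigenvalue $\nu$ of $\bigchi_A$ with $|\mu_j - \nu| \leq sv_{\bigchi_A}(\bigchi_B)$. Either $\nu = \lambda_i$ for some standard eigenvalue $\lambda_i = a_i + b_i i$ of $A$ (with $b_i \geq 0$), in which case we keep $\lambda_i$; or $\nu = \overline{\lambda_i}$, and then the key elementary observation is that since $b_i, d_j \geq 0$,
\begin{equation*}
|\mu_j - \lambda_i|^2 = (c_j-a_i)^2 + (d_j - b_i)^2 \;\leq\; (c_j - a_i)^2 + (d_j + b_i)^2 = |\mu_j - \overline{\lambda_i}|^2,
\end{equation*}
so again $|\mu_j - \lambda_i| \leq |\mu_j - \nu|$. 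In either case $\min_{i} |\mu_j - \lambda_i| \leq |\mu_j - \nu| \leq sv_{\bigchi_A}(\bigchi_B)$. Taking the maximum over $j \in \{1,\dots,n\}$ gives $sv_A(B) \leq sv_{\bigchi_A}(\bigchi_B)$.

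Combining the two displayed inequalities yields \eqref{Eqn-sv right eigenvalues}. There is no real obstacle in this argument; the only subtle point is the elementary inequality in the second paragraph, which crucially uses the fact that standard eigenvalues are normalized to lie in the closed upper half-plane. Without this normalization, the passage from the $2n$ eigenvalues of $\bigchi_A$ back to the $n$ standard eigenvalues of $A$ would lose a constant factor; the chosen normalization makes the passage essentially free.
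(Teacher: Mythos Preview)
Your proposal is correct and follows essentially the same approach as the paper: lift to the complex adjoint matrices $\bigchi_A,\bigchi_B\in M_{2n}(\mathbb{C})$, apply Elsner's inequality there, and then use the elementary upper-half-plane inequality $|\mu_j-\lambda_i|\le|\mu_j-\overline{\lambda_i}|$ (valid because standard eigenvalues have nonnegative imaginary part) to deduce $sv_A(B)\le sv_{\bigchi_A}(\bigchi_B)$. The paper's write-up of the last step is organized slightly differently (it phrases it as $\min_{1\le i\le 2n}|\mu_j-\lambda_i|=\min_{1\le i\le n}|\mu_j-\lambda_i|$), but the content is identical.
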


\begin{proof}
Let $\lambda_1,\lambda_2,\ldots,\lambda_n$ and $\mu_1,\mu_2,\ldots,\mu_n$ be the standard eigenvalues of 
$A$ and $B$ respectively. Then $\lambda_1,\ldots,\lambda_n, \lambda_{n+1},\ldots, \lambda_{2n}$ and 
$\mu_1,\ldots,\mu_n,\mu_{n+1},\ldots,\mu_{2n}$ are the eigenvalues of the complex adjoint matrices $\bigchi_A$
and $\bigchi_B$ respectively, where $\lambda_{n+k} = \overline{\lambda}_k$ and $\mu_{n+k} = \overline{\mu}_k$ 
for $ 1\leq k \leq n$. Inequality \eqref{Eqn-Elsner inequality} yields
\begin{align*}
		sv_{\chi_A}\big(\bigchi_B \big) & \leq || \bigchi_A - \bigchi_B||_2^{1/2n} \big( ||\bigchi_A||_2
		+ ||\bigchi_B||_2 \big)^{1-1/2n} \\
		& = ||A-B||_2^{1/2n} (||A||_2+||B||_2)^{1-1/2n}.
\end{align*}
Since $|a-b| \leq |a - \overline{b}|$ for all $a,b \in \mathbb{C}$ with $a$ and $b$ having nonnegative 
imaginary parts, we have $\displaystyle \min_{1 \leq i \leq 2n} |\mu_j-\lambda_i| = \displaystyle
\min_{1 \leq i \leq n} |\mu_j-\lambda_i|$ for $j=1,\ldots,n$. This implies 
$\displaystyle \max _{1 \leq j\leq n} \min_{1 \leq i \leq 2n} |\mu_j-\lambda_i| = 
\displaystyle \max_{1 \leq j \leq n} \min_{1 \leq i \leq n} |\mu_j-\lambda_i|$. But 
$\displaystyle \max _{1 \leq j\leq n} \min_{1 \leq i \leq 2n} |\mu_j-\lambda_i| \leq 
\displaystyle \max _{1 \leq j \leq 2n} \min_{1 \leq i \leq  2n} |\mu_j-\lambda_i|$. 
Therefore, 
$\displaystyle \max_{1 \leq j \leq n} \min_{1 \leq i \leq n} |\mu_j-\lambda_i| \leq
\displaystyle \max _{1 \leq j \leq 2n} \min_{1 \leq i \leq 2n} |\mu_j-\lambda_i|$. Thus,
$sv_A(B) \leq sv_{\chi_A}\big(\bigchi_B \big)$ and we finally get $sv_A(B) \leq ||A-B||_2^{1/2n} 
	(||A||_2+||B||_2)^{1-1/2n}$.
\end{proof}

\medskip
\noindent
Because the right hand side of Inequality \eqref{Eqn-sv right eigenvalues} is
symmetric in $A$ and $B$, the following corollary is immediate.

\medspace
\begin{corollary}\label{Cor-Hd right eigenvalues}
Let $A,B \in M_n(\mathbb{H})$. Then
\begin{equation}
		hd(A,B) \leq ||A-B||_2^{1/2n} (||A||_2+||B||_2)^{1-1/2n}.
\end{equation}
\end{corollary}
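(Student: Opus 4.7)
The plan is to deduce the bound for $hd(A,B)$ directly from Theorem \ref{Thm-sv right eigenvalues} by exploiting the symmetry of the upper bound. Since the definition $hd(A,B) = \max\{sv_A(B), sv_B(A)\}$ already packages the two spectral variations together, the only task is to show that a single expression dominates both, and the form of Inequality \eqref{Eqn-sv right eigenvalues} makes this essentially automatic.

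Concretely, I would first apply Theorem \ref{Thm-sv right eigenvalues} to the ordered pair $(A,B)$ to obtain
\[
sv_A(B) \leq \|A-B\|_2^{1/2n}\,(\|A\|_2+\|B\|_2)^{1-1/2n}.
\]
Next I would apply the same theorem to the ordered pair $(B,A)$, which yields
\[
sv_B(A) \leq \|B-A\|_2^{1/2n}\,(\|B\|_2+\|A\|_2)^{1-1/2n}.
\]
Since the spectral norm is unitarily invariant and in particular $\|B-A\|_2 = \|A-B\|_2$, and since addition of real numbers is commutative, the two right-hand sides coincide.

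Taking the maximum of the two left-hand sides then gives
\[
hd(A,B) = \max\{sv_A(B), sv_B(A)\} \leq \|A-B\|_2^{1/2n}\,(\|A\|_2+\|B\|_2)^{1-1/2n},
\]
as desired. There is effectively no obstacle here: the work has been done in proving Theorem \ref{Thm-sv right eigenvalues}, where one had to pass through the complex adjoint and reconcile the $2n$ eigenvalues of $\bigchi_A$ with the $n$ standard eigenvalues of $A$; the corollary merely harvests the symmetry of the resulting bound. If anything, the only point worth flagging explicitly is that Theorem \ref{Thm-sv right eigenvalues} is stated as a hypothesis-free inequality for arbitrary $A,B \in M_n(\mathbb{H})$, so there is no side condition to verify before swapping $A$ and $B$.
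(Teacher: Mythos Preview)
Your proposal is correct and matches the paper's approach exactly: the paper simply remarks that the right-hand side of Inequality~\eqref{Eqn-sv right eigenvalues} is symmetric in $A$ and $B$, so the corollary is immediate. You have just written out explicitly what ``immediate'' means here.
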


\medspace
\begin{definition}
Let $A, B \in M_n(\mathbb{H})$ and let $\lambda_1, \lambda_2,\ldots,\lambda_n$ and $\mu_1,\mu_2,\ldots,\mu_n$ 
be the basal right coneigenvalues of $A$ and $B$ respectively. Define the con-spectral variation of 
$B$ with respect $A$ denoted by $svc_A(B)$ as
\begin{equation}\label{Dfn-Sv for coneigenvalues}
		svc_A(B) := \displaystyle \max_{1 \leq j\leq n} \min_{1 \leq i \leq n} |\mu_j-\lambda_i|
\end{equation}
\noindent
and define the Hausdorff distance between the basal right coneigenvalues of $A$ and $B$ denoted by 
$hdc(A, B)$ as 
\begin{equation}\label{Dfn-Hd for coneigenvalues}
		hdc(A,B) := \max \{svc_A(B), svc_B(A)\}.
\end{equation}
\end{definition}

\medskip
\noindent
We have the following analogous result of Elsner's inequality for basal right coneigenvalues of 
quaternion matrices.

\medspace
\begin{theorem}\label{Thm-Sv for coneigenvalues}
Let $A,B \in M_n(\mathbb{H})$. Let $\lambda_1, \lambda_2,\ldots,\lambda_n$ and $\mu_1,\mu_2,\ldots,\mu_n$ 
be the basal right coneigenvalues of $A$ and $B$ respectively. Then
\begin{equation}\label{Eqn-Sv for coneigenvalues}
		svc_A(B) \leq ||A-B||_2^{1/2n} (||A||_2+||B||_2)^{1-1/2n}.
\end{equation}
\end{theorem}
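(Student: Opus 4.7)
The plan is to reduce the statement to Theorem~\ref{Thm-sv right eigenvalues} via the correspondence between the basal right coneigenvalues of a quaternion matrix and the standard eigenvalues of its left multiple by $j$, as given by Lemma~\ref{lem-computing basal coneigenvalues}. Set $\widehat{\lambda}_p = (i+j)j\lambda_p(i+j)^{-1}$ and $\widehat{\mu}_p = (i+j)j\mu_p(i+j)^{-1}$ for $p=1,\dots,n$; these are precisely the standard eigenvalues of $jA$ and $jB$, respectively.

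The first step is to verify that the con-spectral variation of $A,B$ coincides with the ordinary spectral variation of the twisted matrices $jA,jB$. For any indices $r,s$,
$$|\widehat{\mu}_s - \widehat{\lambda}_r| = |(i+j)\,j\,(\mu_s - \lambda_r)\,(i+j)^{-1}| = |i+j|\cdot |j| \cdot |\mu_s - \lambda_r| \cdot |(i+j)^{-1}| = |\mu_s - \lambda_r|,$$
since $|j|=1$ and $|i+j|\,|(i+j)^{-1}|=1$. Taking the minimum over $r$ and then the maximum over $s$ in both expressions yields $sv_{jA}(jB) = svc_A(B)$.

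The second step is to apply Theorem~\ref{Thm-sv right eigenvalues} to $jA,jB \in M_n(\mathbb{H})$, which gives
$$sv_{jA}(jB) \leq ||jA - jB||_2^{1/2n}\bigl(||jA||_2 + ||jB||_2\bigr)^{1 - 1/2n}.$$
By Proposition~\ref{Prop-Properties-complex adjoint}(6), $||jA||_2 = ||A||_2$ and $||jB||_2 = ||B||_2$, and since $j(A-B) = jA - jB$, the same proposition also gives $||jA - jB||_2 = ||A - B||_2$. Combining these identities with $sv_{jA}(jB) = svc_A(B)$ yields Inequality~\eqref{Eqn-Sv for coneigenvalues}.

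I do not anticipate any substantial obstacle: the argument is essentially a transplant of the previously established Elsner-type bound for standard eigenvalues back to the basal right coneigenvalues, using the $j$-multiplication device already exploited in Sections~\ref{sec-3.1} and~\ref{sec-3.2}. The only technical point to be careful about is recognizing that the map $q \mapsto (i+j)\,j\,q\,(i+j)^{-1}$ on $\mathbb{H}$ is modulus-preserving, so distances between basal right coneigenvalues match those between their counterparts in the standard spectra of $jA$ and $jB$.
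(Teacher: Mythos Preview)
Your proof is correct and follows essentially the same approach as the paper's own proof: both reduce to Theorem~\ref{Thm-sv right eigenvalues} applied to $jA$ and $jB$ via Lemma~\ref{lem-computing basal coneigenvalues}, using that the map $q\mapsto (i+j)jq(i+j)^{-1}$ preserves moduli so that $svc_A(B)=sv_{jA}(jB)$, and then invoking $||jX||_2=||X||_2$ to rewrite the bound. Your write-up is slightly more explicit in justifying the modulus preservation and in citing Proposition~\ref{Prop-Properties-complex adjoint}(6) for the norm identities, but the argument is the same.
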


\begin{proof}
We know by Lemma \ref{lem-computing basal coneigenvalues}, $\widehat{\lambda}_1 = 
(i+j)j\lambda_1(i+j)^{-1}, \widehat{\lambda}_2 = (i+j)j\lambda_2(i+j)^{-1}, \ldots, \widehat{\lambda}_n = 
(i+j)j\lambda_n(i+j)^{-1}$ and $\widehat{\mu}_1 = (i+j)j\mu_1(i+j)^{-1}, \widehat{\mu}_2 = 
(i+j)j\mu_2(i+j)^{-1}, \ldots, \widehat{\mu}_n = (i+j)j\mu_n(i+j)^{-1}$ are the standard eigenvalues 
of $jA$ and $jB$ respectively. Since $|\mu_j - \lambda_i| = | \widehat{\mu}_j -\widehat{\lambda}_i|$ for 
$i,j=1,2,\dots,n$, we have $svc_A(B) = sv_{jA}(jB)$. Therefore, from Theorem \ref{Thm-sv right eigenvalues} 
we have 
\begin{align*}
		svc_A(B) & \leq ||jA-jB||_2^{1/2n} (||jA||_2+||jB||_2)^{1-1/2n} \\
		&= ||A-B||_2^{1/2n} (||A||_2+||B||_2)^{1-1/2n}.
\end{align*}
This completes the proof.
\end{proof}

\medskip
\noindent
We illustrate the inequalities given in Theorems \ref{Thm-sv right eigenvalues} and 
\ref{Thm-Sv for coneigenvalues} with the following example.

\medskip
\begin{example}
Consider the matrices $A$ and $B$ given in Example \ref{Ex-Illustration examples}. The basal right coneigenvalues 
of $A$ and $B$ are $\lambda_1 = 1-j = \lambda_2$ and $\mu_1 = 5-j, \, \mu_2= 3+j$, respectively. Note that 
$||A||_2 = \sqrt{2}$, $||B||_2 = \sqrt{26}$ and $||A-B||_2 = 5.5129$. It is straightforward to calculate that 
$svc_A(B) = 4$ and $||A-B||_2^{1/2n} (||A||_2+||B||_2)^{1-1/2n} = 6.2473$. Additionally, the standard eigenvalues 
of $A$ and $B$ are respectively, $\alpha_1 = \sqrt{2} i = \alpha_2$ and $\beta_1 = -1.0307+3.8810i, \, \beta_2 = 1.0307+3.8810i$. Therefore, $sv(B)_A = 2.6735$. Hence, Inequalities \eqref{Eqn-sv right eigenvalues} and 
\eqref{Eqn-Sv for coneigenvalues} hold good.
\end{example}

\medskip
\noindent
We end with the following corollary which is a direct consequence of Theorem \ref{Thm-Sv for coneigenvalues}.

\medspace
\begin{corollary}\label{Cor-Hd for coneigenvalues}
Let $A,B \in M_n(\mathbb{H})$. Let $\lambda_1, \lambda_2,\ldots,\lambda_n$ and $\mu_1,\mu_2,\ldots,\mu_n$ 
be the basal right coneigenvalues of $A$ and $B$ respectively. Then
\begin{equation}
		hdc(A,B) \leq ||A-B||_2^{1/2n} (||A||_2+||B||_2)^{1-1/2n}.
\end{equation}
\end{corollary}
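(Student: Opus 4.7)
The plan is to deduce this corollary directly from Theorem \ref{Thm-Sv for coneigenvalues} by exploiting the symmetry of the right-hand side of Inequality \eqref{Eqn-Sv for coneigenvalues} in $A$ and $B$. Note that $\|A - B\|_2 = \|B - A\|_2$ (the spectral norm is invariant under negation), and the sum $\|A\|_2 + \|B\|_2$ is obviously symmetric in $A$ and $B$. Thus the bound in Theorem \ref{Thm-Sv for coneigenvalues} does not change if $A$ and $B$ are interchanged.

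First, I would apply Theorem \ref{Thm-Sv for coneigenvalues} directly to obtain
\begin{equation*}
svc_A(B) \leq \|A - B\|_2^{1/2n} (\|A\|_2 + \|B\|_2)^{1 - 1/2n}.
\end{equation*}
Next, I would apply the same theorem with the roles of $A$ and $B$ swapped, yielding
\begin{equation*}
svc_B(A) \leq \|B - A\|_2^{1/2n} (\|B\|_2 + \|A\|_2)^{1 - 1/2n} = \|A - B\|_2^{1/2n} (\|A\|_2 + \|B\|_2)^{1 - 1/2n}.
\end{equation*}

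Finally, combining these two inequalities with the definition $hdc(A, B) := \max\{svc_A(B), svc_B(A)\}$ given in \eqref{Dfn-Hd for coneigenvalues} immediately gives the stated bound. There is no substantial obstacle here; the only thing to verify (which is immediate) is the $A \leftrightarrow B$ symmetry of the upper bound, and the rest is a one-line application of the previous theorem and the definition of the Hausdorff distance between basal right coneigenvalues.
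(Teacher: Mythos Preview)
Your proposal is correct and matches the paper's approach exactly: the paper states the corollary as a direct consequence of Theorem \ref{Thm-Sv for coneigenvalues}, relying (as with the earlier Corollary \ref{Cor-Hd right eigenvalues}) on the symmetry of the right-hand side of \eqref{Eqn-Sv for coneigenvalues} in $A$ and $B$. There is nothing to add.
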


\medskip
\noindent
{\bf Acknowledgements:} Pallavi Basavaraju 
and Shrinath Hadimani express their gratitude to the University Grants Commission (UGC) and the Council of 
Scientific and Industrial Research (CSIR) of the Government of India for their financial assistance in the form of 
research fellowship.

\medskip
\noindent
{\bf Declaration of competing interest:} There is no competing interest between the authors.

\medskip
\noindent
{\bf Data availability:} The authors declare that no data was used for the research described in the article.

\bibliographystyle{sn-mathphys.bst}

\end{document}